\theoremstyle{definition}
\theoremstyle{remark}
\theoremstyle{corollary}
\theoremstyle{theorem}
\theoremstyle{corollary}
\newtheorem{theorem}{Theorem}[section]
\newtheorem{lemma}[theorem]{Lemma}
\newtheorem{proposition}[theorem]{Proposition}
\theoremstyle{corollary}
\newtheorem{corollary}[theorem]{Corollary}
\theoremstyle{definition}
\theoremstyle{remark}
\newtheorem{remark}[theorem]{Remark}
\numberwithin{equation}{section}
\newcommand{\C}{\mathbb{C}}
\newcommand{\Z}{\mathbb{Z}}
\def\P{\mathbb{P}}
\def\reg{\operatorname{reg}}
\def\gon{\operatorname{gon}}
\def\ev{\operatorname{ev}}
\def\pr{\operatorname{pr}}
\newcommand{\suchthat}{\,\ifnum\currentgrouptype=16 \middle\fi|\,}
\title[Smooth projective varieties with $2$-regular structure sheaf]{Classification and syzygies of smooth projective varieties with $2$-regular structure sheaf}
\begin{document}

\author{Sijong Kwak}
\address{Department of Mathematical Sciences, KAIST, Daejeon, Korea}
\email{sjkwak@kaist.ac.kr}

\author{Jinhyung Park}
\address{School of Mathematics, Korea Institute for Advanced Study, Seoul, Korea}
\email{parkjh13@kias.re.kr}

\thanks{S. Kwak was supported by Basic Science Research Program through the National Research Foundation of Korea (NRF) funded by the Ministry of Science and ICT (2015R1A2A2A01004545).}
\subjclass[2010]{14N05, 14N30, 13D02}

\date{\today}


\keywords{Castelnuovo-Mumford regularity, adjunction mapping, syzygy, Koszul cohomology}

\begin{abstract}
The geometric and algebraic properties of smooth projective varieties with $1$-regular structure sheaf are well understood, and the complete classification of these varieties is a classical result. The aim of this paper is to study the next case: smooth projective varieties with $2$-regular structure sheaf. First, we give a classification of such varieties using adjunction mappings. Next, under suitable conditions, we study the syzygies of section rings of those varieties to understand the structure of
the Betti tables, and show a sharp bound for Castelnuovo-Mumford regularity.
\end{abstract}

\maketitle


\section{Introduction}

Throughout the paper, we work over the field $\C$ of complex numbers. We begin by recalling the definition of Castelnuovo-Mumford regularity.
A coherent sheaf $\mathcal{F}$ on a projective variety $X$ with a very ample line bundle $L$ is said to be \emph{$m$-regular} in the sense of Castelnuovo-Mumford if $H^i(X, \mathcal{F}\otimes L^{m-i})=0$ for all $i >0$.
By \cite[Theorem 1.8.5]{positivityI}, if $\mathcal{F}$ is $m$-regular, then $\mathcal{F}$ is $(m+1)$-regular.
An embedded projective variety $X \subsetneq \P^r$ is said to be \emph{$m$-regular} if the ideal sheaf $\mathcal{I}_{X|\P^r}$ is $m$-regular, i.e., $H^i(\P^r, \mathcal{I}_{X|\P^r}(m-i))=0$ for $i >0$. Note that $X \subsetneq \P^r$ is $m$-regular if and only if
\begin{enumerate}[\indent{\tiny$\bullet$}]
\item $X \subsetneq \P^r$ is $(m-1)$-normal, i.e., the natural restriction map
$$
H^0(\P^{r}, \mathcal{O}_{\P^{r}}(m-1)) \rightarrow H^0 (X, \mathcal{O}_X(m-1))
$$
is surjective, and
\item $\mathcal{O}_X$ is $(m-1)$-regular, i.e., $H^i(X, \mathcal{O}_X(m-1-i))=0$ for $i>0$.
\end{enumerate}
The \emph{Castelnuovo-Mumford regularity} $\reg(X)$ is the least integer $m$ such that $X \subsetneq \P^r$ is $m$-regular. We also denote by $\reg(\mathcal{O}_X)$ the least integer $m$ such that $\mathcal{O}_X$ is $m$-regular. For more detail, we refer to \cite[Section 1.8]{positivityI}.

\medskip

Now, let $X \subsetneq \P^r$ be a non-degenerate smooth projective variety of dimension $n$, and $H$ be its general hyperplane section. We always assume that $X \subsetneq \P^r$.
Then $\reg(X) \geq 2$ and $\reg(\mathcal{O}_X) \geq 1$.
It is a classical fact due to Eisenbud-Goto \cite{EG} that
\begin{equation}\tag{$\textcolor{OrangeRed}{*}$}\label{clfact}
\text{$\reg(X) = 2$ if and only if $X \subsetneq \P^r$ is a variety of minimal degree}
\end{equation}
(for a generalization to algebraic sets, see \cite{EGHP}). Recall that a variety of minimal degree is either a rational normal scroll, a quadric hypersurface $Q^n \subset \P^{n+1}$, or the second Veronese surface $v_2(\P^2) \subset \P^5$ (see e.g., \cite{EH}).
To motivate our first result and approach, we give a quick geometric proof of the classical fact (\ref{clfact}) based on adjunction theory.

\begin{proof}[Proof of the Fact $($\ref{clfact}$)$]
The direction $(\Leftarrow)$ is trivial. For the converse direction $(\Rightarrow)$, we divide into two steps.

\smallskip

\noindent\underline{Step 1}.
Here we classify smooth projective varieties $X \subsetneq \P^r$ with $\reg(\mathcal{O}_X)=1$. Note that $\reg(\mathcal{O}_X)=1$ implies $H^0(X, \mathcal{O}_X(K_X+(n-1)H)) = H^n(X, \mathcal{O}_X(-(n-1)))^*=0$. Thus $K_X+(n-1)H$ is not base point free. By a classification result in adjunction theory \cite[Theorem 1.4]{Io1}, we see that $(X, H)$ is either
$$
(\P^n, \mathcal{O}_{\P^n}(1)), (Q^n, \mathcal{O}_{\P^{n+1}}(1)|_{Q^n}), (\P^2, \mathcal{O}_{\P^2}(2)), \text{or } (\P(E), \mathcal{O}_{\P(E)}(1))
$$
where $Q \subset \P^{n+1}$ is a quadric hypersurface and $E$ is a very ample vector bundle of rank $n$ on a smooth projective curve. Since we also have $H^1(X, \mathcal{O}_X)=0$, it follows that $X \subsetneq \P^r$ is a quadric hypersurface, a (possibly projected) second Veronese surface, or a rational scroll.

\smallskip

\noindent\underline{Step 2}. $\reg(X)=2$ implies that $\reg(\mathcal{O}_X)=1$ and $X \subsetneq \P^r$ is linearly normal. So, it follows immediately that $X \subsetneq \P^r$ is a quadric hypersurface, the second Veronese surface $v_2(\P^2) \subset \P^5$, or a rational normal scroll embedded by the linear system $|\mathcal{O}_{\P(E)}(1)|$. All of these varieties are of minimal degree.
\end{proof}

Along this line, it is natural to consider the classification problem of smooth projective varieties $X \subsetneq \P^r$ with $\reg(X)=3$. The first step would also be a classification of smooth projective varieties $X\subsetneq \P^r$ with $\reg(\mathcal{O}_X)=2$ using adjunction theory.
Note that $\mathcal {O}_X$-regularity with respect to a very ample divisor $H$ is an intrinsic property not depending on the embedding of $X$ given by $H$.
Recall that 
$$
\text{ $\reg(\mathcal{O}_X)=1$ if and only if $(Q^n, \mathcal{O}_{\P^{n+1}}(1)|_{Q^n})$, $(\P^2, \mathcal{O}_{\P^2}(2))$, or $(\P(E), \mathcal{O}_{\P(E)}(1))$, }
$$
where $Q \subset \P^{n+1}$ is a quadric hypersurface and $E$ is a very ample vector bundle of rank $n$ on a projective line $\P^1$.

The first main theorem of this paper completes the first step classifying smooth projective varieties with $\reg(\mathcal{O}_X)=2$ .

\begin{theorem}\label{classthmreg2}
Let $X \subsetneq \P^r$ be a non-degenerate smooth projective variety of dimension $n$, and $H$ be its general hyperplane section. Then $\reg(\mathcal{O}_X) = 2$ if and only if $(X, H)$ is one of the following:
\begin{enumerate}[\noindent$(1)$]
\item $n=1$: $X=C$ is a smooth projective curve of genus $g \geq 1$, and $H$ is a non-special very ample divisor $(H^1(C, \mathcal{O}_C(H))=0)$.

\item $n=2$: $X=S$ is a smooth projective surface with $p_g(S)=h^0(S, \mathcal{O}_S(K_S))=0$, and $H$ is a very ample divisor with $H^1(S, \mathcal{O}_S(H))=0$, but $(S, H) \neq (\P^2, \mathcal{O}_{\P^2}(2))$.

\item $n \geq 3$: $X=\P(E)$, where $E$ is a very ample vector bundle of rank $n$ on a smooth projective curve $C$ of genus $g \geq 1$ such that $H^1(C, E)=0$, and $H$ is the tautological divisor of $\P(E)$.

\item $n \geq 3$: $X$ is a del Pezzo manifold, and $H$ is a very ample divisor such that $-K_X=(n-1)H$.

\item $n \geq 3$: $X$ is a smooth member of $|2L+\pi^* D|$, and $H:=L|_X$, where $L$ is the tautological divisor of $\P(E)$ with a natural projection $\pi \colon \P(E) \to C$ such that $E$ is a globally generated vector bundle of rank $n+1$ on a smooth projective curve $C$ with $H^1(C, E)=0$, and $D$ is a divisor on $C$. In this case, $X$ has a hyperquadric fibration over $C$.

\item $n \geq 3$: $X=\P(E)$, where $E$ is a very ample vector bundle of rank $n-1$ on a smooth projective surface $S$ with $p_g(S)=0$ such that $H^1(S, E)=0$, and $H$ is the tautological divisor of $\P(E)$.

\item $n=3, 4$: The first reduction $(\overline{X}, \overline{H})$ is either $(\P^3, \mathcal{O}_{\P^3}(3)), (\P^4, \mathcal{O}_{\P^4}(2)), (Q^3, \mathcal{O}_{\P^4}(2)|_{Q^3})$, or $(Y, L)$, where $Q^3 \subset \P^4$ is a three-dimensional quadric hypersurface, $Y$ is a three-dimensional scroll over a smooth projective curve $C$, and $L$ is a very ample divisor on $Y$ such that $H^1(Y, L)=0$ and it induces $\mathcal{O}_{\P^2}(2)$ on each fiber. In this case, $|K_X+(n-1)H|$ induces a morphism $\varphi \colon X \to \overline{X}$, which is a blow-up of $\overline{X}$ at finitely many distinct points, and $K_X + (n-1)H = \varphi^*(K_{\overline{X}} + (n-1)\overline{H})$.
\end{enumerate}
\end{theorem}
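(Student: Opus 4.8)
I would establish the two implications separately, organizing the forward one by $n=\dim X$. The backward implication is a case-by-case cohomological verification. For $(1)$ and $(2)$ it is immediate from the hypotheses, using $H^2(\mathcal{O}_S)\cong H^0(\mathcal{O}_S(K_S))^{\vee}$ in the surface case. For the (generalized) scroll cases $(3)$ and $(6)$ one applies the Leray spectral sequence to $\pi\colon\P(E)\to C$ (resp.\ $\to S$): since $R^j\pi_*\mathcal{O}_{\P(E)}(k\xi)=0$ for $k<0$ apart from the top direct image (which needs $k$ very negative), the only possibly nonzero contributions to $H^i(X,\mathcal{O}_X((2-i)H))$ come from the cohomology of $\mathcal{O}$ and of $E$ on the base, which vanish in the relevant degrees by hypothesis, while the failure of $1$-regularity comes either from $H^1(X,\mathcal{O}_X)=H^1(\text{base},\mathcal{O})\ne 0$ or from the top-direct-image term. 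For the del Pezzo case $(4)$, $-K_X=(n-1)H$ gives $\mathcal{O}_X((2-i)H)\cong\mathcal{O}_X(K_X+(n+1-i)H)$, so Kodaira vanishing yields $2$-regularity, and $H^n(X,\mathcal{O}_X(K_X))\ne 0$ shows $\mathcal{O}_X$ is not $1$-regular. For the hyperquadric fibration $(5)$ one twists $0\to\mathcal{O}_{\P(E)}(-2L-\pi^*D)\to\mathcal{O}_{\P(E)}\to\mathcal{O}_X\to 0$ by $kL$ and pushes forward by $\pi$, using $\pi_*\mathcal{O}_X(L|_X)\cong E$, to reduce everything to cohomology on $C$. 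For $(7)$, $\varphi$ contracts disjoint linear $\P^{n-1}$'s to points, so $\varphi_*\mathcal{O}_X(\sum E_i)=\mathcal{O}_{\overline X}$ and, from $K_X+(n-2)H=\varphi^*(K_{\overline X}+(n-2)\overline H)+\sum E_i$, one gets $H^0(X,\mathcal{O}_X(K_X+(n-2)H))=H^0(\overline X,\mathcal{O}_{\overline X}(K_{\overline X}+(n-2)\overline H))$, which vanishes for each of the four listed $(\overline X,\overline H)$; the remaining groups, and the failure of $1$-regularity, are inherited from $\overline X$.

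\textbf{Forward implication, low dimension.} For $n=1$, $2$-regularity of $\mathcal{O}_C$ is exactly $H^1(C,\mathcal{O}_C(H))=0$, and the failure of $1$-regularity is $H^1(C,\mathcal{O}_C)\ne 0$, i.e.\ $g\ge 1$; this gives $(1)$. For $n=2$, $2$-regularity gives $H^2(\mathcal{O}_S)=0$ (so $p_g(S)=0$) and $H^1(\mathcal{O}_S(H))=0$, while the failure of $1$-regularity says $\reg(\mathcal{O}_S)\ge 2$; combining these with the list of polarized surfaces of $\mathcal{O}$-regularity $\le 1$ produced in Step~1 of the proof of $(\ref{clfact})$ --- $(\P^2,\mathcal{O}_{\P^2}(2))$, hyperquadrics, and scrolls over $\P^1$ --- isolates the surfaces to be excluded, giving $(2)$.

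\textbf{Forward implication, $n\ge 3$.} The decisive input of $2$-regularity is
$$
H^0(X,\mathcal{O}_X(K_X+(n-2)H))=H^n(X,\mathcal{O}_X(2-n))^{\vee}=0,
$$
together with $H^1(\mathcal{O}_X(H))=0$, $H^2(\mathcal{O}_X)=0$, and the failure of $1$-regularity. I would then run the adjunction-theoretic trichotomy on $K_X+(n-1)H$ (Ionescu; Sommese--Van de Ven). \emph{(i)} If $K_X+(n-1)H$ is not nef, then $(X,H)$ is $(\P^n,\mathcal{O}(1))$, $(Q^n,\mathcal{O}(1))$, or a scroll $\P(E)$ over a smooth curve $C$ with $H=\xi$; the first two have $\reg(\mathcal{O}_X)\le 1$, and for the scroll $H^1(\mathcal{O}_X(H))=H^1(C,E)=0$ while the failure of $1$-regularity forces $g(C)\ge 1$ --- this is $(3)$. \emph{(ii)} If $K_X+(n-1)H$ is nef but not big, the Beltrametti--Sommese classification gives: a del Pezzo manifold (so $-K_X=(n-1)H$, giving $(4)$); a hyperquadric fibration over a smooth curve, which the structure theory of quadric fibrations realizes inside a $\P(E)\to C$ as a member of $|2L+\pi^*D|$ with $\pi_*\mathcal{O}_X(L|_X)\cong E$ globally generated and $H^1(C,E)=H^1(\mathcal{O}_X(H))=0$, giving $(5)$; or a scroll $\P(E)$ over a smooth surface $S$, where $H^2(\mathcal{O}_X)=H^2(S,\mathcal{O}_S)=0$ gives $p_g(S)=0$ and $H^1(\mathcal{O}_X(H))=H^1(S,E)=0$, giving $(6)$. \emph{(iii)} If $K_X+(n-1)H$ is nef and big, the first reduction $\varphi\colon X\to\overline X$ exists, contracting disjoint linear $\P^{n-1}$'s to distinct points, with $K_{\overline X}+(n-1)\overline H$ ample and $H^0(K_{\overline X}+(n-2)\overline H)=H^0(K_X+(n-2)H)=0$; the second-adjunction classification of such $(\overline X,\overline H)$ (Beltrametti--Sommese--Fania; Ionescu) then forces $n\in\{3,4\}$ and $(\overline X,\overline H)$ to be one of the four types in $(7)$.

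\textbf{Main obstacle.} I expect step \emph{(iii)} --- case $(7)$ --- to be the hard part: one must carry out, or invoke with care, the full second-adjunction classification, showing in particular that when $n\ge 5$ and $K_X+(n-1)H$ is nef and big the divisor $K_X+(n-2)H$ is necessarily effective (contradicting $2$-regularity, so no new cases arise in high dimension), and then matching the surviving low-dimensional $(\overline X,\overline H)$ against the remaining consequences of $2$-regularity. A second source of friction, running through step \emph{(ii)} and the backward direction, is identifying precisely which positivity/non-vanishing hypotheses within each family are equivalent to $\reg(\mathcal{O}_X)=2$ rather than $\le 1$ --- e.g.\ the vanishings $H^1(C,E)=0$ in $(3)$ and $(5)$, the condition on $K_S+\det E$ (equivalently the nefness of $K_X+(n-1)H$) in $(6)$, and the very-ampleness constraint on the blow-up in $(7)$ --- and, on the backward side, checking that the intermediate groups $H^i(\mathcal{O}_X(\cdot))$ for $3\le i\le n-1$ vanish, which in each family reduces to vanishing of higher direct images or to Kodaira vanishing.
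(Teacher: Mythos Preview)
Your proposal is correct and follows essentially the same route as the paper: reduce $2$-regularity (via Kodaira vanishing and Serre duality) to the three conditions $H^1(\mathcal{O}_X(H))=H^2(\mathcal{O}_X)=H^0(K_X+(n-2)H)=0$, then run the Ionescu/Beltrametti--Sommese adjunction classification on $K_X+(n-1)H$, with the birational-reduction case (7) handled by the second-reduction theory to force $n\le 4$ and the listed $(\overline{X},\overline{H})$. The only cosmetic difference is that the paper organizes the $n\ge 3$ split as ``$K_X+(n-1)H$ not base point free'' versus ``base point free with $\dim B\in\{0,1,2,n\}$'' rather than your nef/big trichotomy, and for the backward direction in (7) it isolates Lemma~\ref{h1same} ($H^1(X,\mathcal{O}_X(H))=H^1(\overline X,\mathcal{O}_{\overline X}(\overline H))$) as the one nontrivial check.
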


Note that del Pezzo manifolds were completely classified by Fujita \cite{F1}, \cite{F2}. For the cases $(3), (5), (6)$, one may want to further classify vector bundles $E$ with $H^1(E)=0$, but the complete solution to this problem seems to be out of reach.

For the next step of the classification of smooth projective varieties $X \subsetneq \P^r$ with $\reg(X)=3$, one needs to show the $2$-normality of projective varieties in Theorem \ref{classthmreg2}. It is already a very difficult problem even in the curve or the scroll case because we should consider projected varieties. There have been some studies of the $k$-normality of projected varieties (see e.g., \cite{AR}, \cite{KEP}).

\medskip

We now turn to syzygetic properties of smooth projective varieties $X \subsetneq \P^r$ with $\reg(\mathcal{O}_X)=2$. To study syzygies of projective varieties, it is often natural to impose the condition that $H^1(X, \mathcal{O}_X)=0$ (for instance, when we apply Green's duality theorem (\cite[Theorem (2.c.6)]{G}).

\begin{remark}
Among all seven cases in Theorem \ref{classthmreg2}, the varieties $X$ with $H^1(X, \mathcal{O}_X)=0$ can occur in all cases except $(1)$ and $(3)$. Actually, if a curve $C$ in $(5), (7)$ is rational or a surface $S$ in $(2), (6)$ is regular (i.e., $p_g(S)=q(S)=0$), then we have $H^1(X, \mathcal{O}_X)=0$.
\end{remark}

It would be very interesting to investigate the Koszul cohomology groups $K_{p,q}(X,V)$ and the Betti table of the section module $R(X, H)$ as a graded $\text{Sym}^\bullet (V)$-module.
Under the assumption that $\reg(\mathcal{O}_X)=2$ and $H^1(X, \mathcal{O}_X)=0$, we have $H^i(X, \mathcal{O}_X(k))=0$ for $0 < i < n$ and $k \in \Z$, and thus, the section ring $R(X, H):=\bigoplus_{m \geq 0} H^0(X, \mathcal{O}_X(mH))$ is Cohen-Macaulay. Furthermore, the syzygies of $R(X, H)$ is the same as those of the section ring $R(C, H|_C)$ of a general curve section $C$ of $X \subsetneq \P^r=\P(V)$.
We remark that $\reg(R(X, H))=\reg(\mathcal{O}_X)=2$ so that the Betti table has height $3$, i.e., $K_{p,q}(X, V)=0$ for $q\ge 3$.
For the basic notation of syzygies and Koszul cohomologies, see Subsection \ref{koscoh}.

There are several related results. It is easy to see that if $\reg(\mathcal{O}_X)=1$, then one can calculate all graded Betti numbers $k_{p,q}(X,V)$ from the Hilbert polynomial.
Ahn-Han \cite{AH} studied syzygies of homogeneous coordinate rings of projective schemes $X \subsetneq \P^r$ with $\reg(X)=3$.
Ein-Lazarsfeld \cite{EL1} gave a general picture of the asymptotic behavior of the Koszul cohomology groups $K_{p,q}(X, V)$ when $V=H^0(X, \mathcal{O}_X(1))$ and $\mathcal{O}_X(1)$ is sufficiently positive. In this case, $X \subsetneq \P^r=\P(V)$ satisfies $N_k$-property for some integer $k >0$.
Recall that $X \subsetneq \P(V)$ satisfies \emph{$N_0$-property} if it is projectively normal, and so the section ring $R(X, H)$ is the same as the homogeneous coordinate ring of $X \subsetneq \P(V)$. We say that $X \subsetneq \P(V)$ satisfies \emph{$N_k$-property} for some integer $k>0$ if it satisfies $N_0$-property and $K_{p,q}(X, V)=0$ for $1 \leq p \leq k $ and $q \geq 2$, i.e., the syzygy modules of $R(X, H)$ have only linear relations starting from quadrics up to $k$-step.

It is also an interesting problem to study the case when $X \subsetneq \P(V)$ is not linearly normal, i.e., $V \subsetneq H^0(X, \mathcal{O}_X(1))$.
The most promising case is when $X$ is a curve because Ein-Lazarsfeld's gonality theorem \cite{EL2} (see also \cite{R}) completely determines the vanishing and nonvanishing of the Koszul cohomology groups $K_{p,q}(X,V)$ for a complete embedding curve.
The second main result of the paper is the following:

\begin{theorem}\label{syzthm}
Let $X \subsetneq \P^r=\P(V)$ be a non-degenerate smooth projective variety of dimension $n$, codimension $e$, and degree $d$, and $H$ be its general hyperplane section. Suppose that $\reg(\mathcal{O}_X)=2$ and we further assume $H^1(X, \mathcal{O}_X)=0$ when $n \geq 2$.
Let $C$ be its general curve section of genus $g=h^0(X, \mathcal{O}_X(K_X + (n-1)H))$ and gonality $\gon(C)$.
\begin{enumerate}[\noindent$(1)$]
\item Suppose that $V = H^0(X, \mathcal{O}_X(1))$, i.e., $X \subsetneq \P^r$ is linearly normal.
If $e \geq g+k$ for some $k \geq 0$, then $X \subsetneq \P^r$ satisfies $N_k$-property, i.e., $K_{0,1}(X, V)=K_{p,q}(X, V)=0$ for $p \leq k$ and $q \geq 2$.

\item Suppose that $V \subsetneq H^0(X, \mathcal{O}_X(1))$ is of codimension $t \geq 1$ and $\mathcal{O}_X(1)$ is sufficiently positive.  For simplicity, we further assume that $e \geq g+1$. Then we have the following:
 \begin{enumerate}[$(i)$]
 \item $K_{p,q}(X, V)=0$ unless $0 \leq p \leq e$ and $q =0,1,2$.
 \item $K_{p,0}(X, V) \neq 0$ if and only if $p=0$.
 \item $K_{p,1}(X, V) \neq 0$ for $0 \leq p \leq e+1 - \gon(C)$ and $K_{p,1}(X, V) = 0$ for $e+2-\gon(C) + t \leq p \leq e$.
 \item $K_{p,2}(X, V) = 0 $ for $0 \leq p \leq e-1-g$ and $K_{p,2}(X, V) \neq 0$ for $e-g+t \leq p \leq e$.
\end{enumerate}
\end{enumerate}
\end{theorem}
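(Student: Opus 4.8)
The plan is to reduce the whole statement to a general curve section and then, for that curve, to combine known results for the \emph{complete} linear system (Green's $N_p$-theorem and the Ein--Lazarsfeld gonality theorem, in its effective form due to Rathmann) with a transfer to the possibly incomplete linear system $V$ via the syzygy bundle. For the reduction: since $\reg(\mathcal{O}_X)=2$ and $H^1(X,\mathcal{O}_X)=0$ (when $n\ge 2$), one has $H^i(X,\mathcal{O}_X(k))=0$ for $0<i<n$ and all $k$, the section ring $R:=R(X,H)$ is Cohen--Macaulay of regularity $2$, and, as noted in the introduction, repeatedly cutting by general hyperplanes gives $K_{p,q}(X,V)\cong K_{p,q}(C,V_C)$, where $C\subset\P(V_C)\cong\P^{e+1}$ is a general curve section of genus $g$ and degree $d$ with $H^1(C,\mathcal{O}_C(1))=0$, and $V_C\subseteq W:=H^0(C,\mathcal{O}_C(1))$ has codimension $t$ (with $t=0$ in part $(1)$); Riemann--Roch gives $d=e+g+1+t$. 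So it suffices to argue on $C\subset\P(V_C)$. For part $(1)$, $t=0$ and $C$ is linearly normal in $\P^{e+1}$ of degree $d=e+g+1$; if $e\ge g+k$ then $\deg\mathcal{O}_C(1)=d\ge 2g+1+k$, so Green's $(2g+1+p)$-theorem gives property $N_k$ for $(C,\mathcal{O}_C(1))$, i.e.\ $K_{0,q}(C,V_C)=0$ for $q\ge1$ and $K_{p,q}(C,V_C)=0$ for $p\le k$, $q\ge2$. Pulling these vanishings back through the reduction isomorphism yields the asserted $N_k$-property for $X\subset\P^r$ (the vanishing $K_{0,q}(X,V)=0$ for $q\ge1$ forces projective normality, so $R$ is the homogeneous coordinate ring).

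For part $(2)$ assume now $t\ge1$, $e\ge g+1$, and $\mathcal{O}_X(1)$ — hence $\mathcal{O}_C(1)$ — sufficiently positive. Item $(i)$ is formal: $K_{p,q}(C,V_C)=0$ for $q\ge3$ because $\reg R=2$, and for $p\ge e+1$ because $R$ is Cohen--Macaulay of dimension $2$ over the polynomial ring $\Sym V_C$ in $e+2$ variables, so $\operatorname{pd}_{\Sym V_C}R=e$. Item $(ii)$: $R_{-1}=0$ and the inclusion $V_C\hookrightarrow R_1=W$ splits linearly, so the Koszul differential $\wedge^pV_C\to\wedge^{p-1}V_C\otimes R_1$ is injective for $p\ge1$; hence $K_{p,0}(C,V_C)=0$ for $p\ge1$, while $K_{0,0}=\C$.

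The content is items $(iii)$ and $(iv)$, to be proved in two steps. First, settle the analogous statements for the \emph{complete} system $C\subset\P(W)\cong\P^{\,e+1+t}$: since $\deg C\gg0$, the Ein--Lazarsfeld gonality theorem gives the precise nonvanishing range of the linear strand $K_{p,1}(C,W)$ in terms of $\gon(C)$, and dualizing — via Green's duality applied to the section module of $\omega_C$ over $\Sym W$, using $\reg R=2$, Cohen--Macaulayness, and $H^1(\mathcal{O}_C(1))=0$ — gives the complementary statement for the quadratic strand $K_{p,2}(C,W)$. Second, transfer to $V_C$ through the short exact sequence of syzygy sheaves
\[
0\longrightarrow M_{V_C}\longrightarrow M_W\longrightarrow\mathcal{O}_C^{\oplus t}\longrightarrow 0,\qquad M_V:=\ker\bigl(V\otimes\mathcal{O}_C\to\mathcal{O}_C(1)\bigr),
\]
which induces on each $\wedge^m M_W$ a length-$t$ filtration with graded pieces $\wedge^{m-j}M_{V_C}\otimes\wedge^j(\C^t)$ for $0\le j\le t$. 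As $C$ is a curve all $H^{\ge2}$ vanish, so the standard identification of Koszul cohomology with $H^1$ of the appropriate exterior power of the syzygy bundle (valid up to correction terms that vanish since $\deg C\gg0$) turns this filtration into exact sequences relating $K_{p,q}(C,V_C)$ to the groups $K_{p',q}(C,W)$ for $p-t\le p'\le p$, weighted by the multiplicities $\binom{t}{j}$. Tracking these gives the vanishing ranges of $(iii)$ and $(iv)$ at once — the upper end of the linear-strand vanishing shifts up by $t$, the lower end of the quadratic-strand vanishing is fixed by the hypothesis $e\ge g+1$ — and, after checking that a syzygy produced by a pencil computing $\gon(C)$ on the complete model is not killed by the connecting maps, the nonvanishing ranges as well; the case $p=0$ of $(iii)$ is just $K_{0,1}(C,V_C)=W/V_C\ne0$.

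I expect the transfer step to be the main obstacle. The filtration only produces \emph{a priori} maps between the Koszul groups of $V_C$ and of $W$ with prescribed sub- and quotient-terms; extracting the vanishing ranges is bookkeeping, but confirming the \emph{nonvanishing} ranges requires showing that the distinguished syzygy coming from a gonality pencil survives all the connecting homomorphisms of the filtration — and this is precisely where the gap of width $\approx t$ between the nonvanishing and vanishing ranges in $(iii)$ and $(iv)$ originates. A secondary difficulty is that $C\subset\P(V_C)$ is not projectively normal, so Green's duality cannot be applied to a homogeneous coordinate ring; one must run it on the section \emph{module} of $\omega_C$ over $\Sym V_C$ and verify that its depth and regularity are exactly what is needed for the dual complex to detect $K_{p,2}$ in the claimed range.
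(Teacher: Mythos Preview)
Your argument for part $(1)$ is exactly the paper's: reduce to the linearly normal curve section, compute $d=e+g+1$ by Riemann--Roch, and apply Green's $(2g+1+k)$-theorem. Likewise, your treatment of $(2)(i)$--$(ii)$ via Cohen--Macaulayness and $\reg R=2$ matches the paper.

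For $(2)(iii)$--$(iv)$ the strategy agrees---reduce to $C$, read off the complete-embedding Betti table from the Ein--Lazarsfeld gonality theorem, and transfer to the incomplete system---but the transfer mechanism differs. You pass through the syzygy-bundle filtration on $\wedge^m M_W$ induced by $0\to M_{V_C}\to M_W\to\mathcal{O}_C^{\oplus t}\to 0$ and its cohomology. The paper instead works one projection at a time, directly on Koszul complexes: the short exact sequence $0\to\Lambda^\bullet W\otimes R\to\Lambda^\bullet V\otimes R\to\Lambda^{\bullet-1}W\otimes R\to 0$ (for $W\subset V$ of codimension one) yields a long exact sequence $K_{p,q}(W)\to K_{p,q}(V)\to K_{p-1,q}(W)\to K_{p-1,q+1}(W)$ (Aprodu--Nagel). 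From this one reads off immediately that nonvanishing of $K_{p,q}(V)$ forces nonvanishing of $K_{p-1,q}(W)$, and vanishing of $K_{p,q}(V)$ forces vanishing of $K_{p,q}(W)$, under the mild hypothesis that the row above is already zero in the relevant range. Iterating $t$ times gives all four ranges in $(iii)$--$(iv)$ with no further work.

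This buys exactly what you flag as your main obstacle: in the paper's setup the nonvanishing transfer is a two-line diagram chase, so there is no need to track a specific gonality syzygy through connecting maps. Your filtration is the sheaf-theoretic shadow of the same exact sequence, so the information content is identical, but extracting nonvanishing from it is genuinely more awkward because the graded pieces of $\wedge^m M_W$ are the $V_C$-bundles, not the other way around.

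Your secondary worry about Green's duality is unnecessary. Duality is a statement about the section module $R(C,H|_C)$ as a graded $\Sym V_C$-module, and that module is Cohen--Macaulay of regularity $2$ regardless of whether $V_C$ is complete; the paper applies it freely at each projection step to convert the $K_{p,2}$ problem into a $K_{p',0}(C,K_C,\,\cdot\,)$ problem, where the same projection long exact sequence (now with $B=K_C$) handles both vanishing and nonvanishing.
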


The Betti table of the variety in Theorem \ref{syzthm} $(2)$ is as follows:
\smallskip
{\tiny \noindent\[
\begin{array}{c|c|c|c|c|c|c|c|c|c|c|c|c|c|c|c|c}
&  0 & 1  & \cdots & e-g-1 &  & \cdots & & e-g+t & \cdots & e+1-\gon(C) &  & \cdots &  & e+2-\gon(C)+t &  \cdots & e \\
\hline
0 & \textcolor{Blue}{1} & \textcolor{Green}{-} & \cdots & \textcolor{Green}{-} & \textcolor{Green}{-} & \cdots & \textcolor{Green}{-} & \textcolor{Green}{-} & \cdots & \textcolor{Green}{-}  & \textcolor{Green}{-} & \cdots & \textcolor{Green}{-} & \textcolor{Green}{-} & \cdots & \textcolor{Green}{-}\\
\hline
1 & \textcolor{Blue}{t}  & \textcolor{Blue}{*}  & \cdots & \textcolor{Blue}{*}  & \textcolor{Blue}{*}  & \cdots & \textcolor{Blue}{*}  & \textcolor{Blue}{*}  & \cdots & \textcolor{Blue}{*}  & \textcolor{Red}{?} & \cdots & \textcolor{Red}{?} & \textcolor{Green}{-} & \cdots & \textcolor{Green}{-}\\
\hline
2 & \textcolor{Green}{-} & \textcolor{Green}{-} & \cdots & \textcolor{Green}{-} & \textcolor{Red}{?} & \cdots & \textcolor{Red}{?} & \textcolor{Blue}{*}  & \cdots & \textcolor{Blue}{*}  & \textcolor{Blue}{*}  & \cdots & \textcolor{Blue}{*}  & \textcolor{Blue}{*}  & \cdots & \textcolor{Blue}{*} \\
\end{array}
\]}
\smallskip
Here $\textcolor{Green}{-}$, $\textcolor{Blue}{*}$, and $\textcolor{Red}{?}$ mean \textcolor{Green}{vanishing}, \textcolor{Blue}{non-vanishing}, and \textcolor{Red}{undetermined}, respectively.

In Remark \ref{middlerange}, we explain how to determine the undetermined Koszul cohomology groups $K_{p,1}(X, V)$ and $K_{p',2}(X, V)$ for the ranges $e+2-\gon(C) \leq p \leq e+1-\gon(C)+t$ and $e-g \leq p' \leq e-g+t-1$.
They heavily depend on the choice of $V \subset H^0(X, \mathcal{O}_X(1))$ and syzygies of $X \subsetneq \P^r$.

\medskip

Finally, we consider the Castelnuovo-Mumford regularity of a non-degenerate projective variety $X \subsetneq \P^r$ of degree $d$ and codimension $e$.
Eisenbud-Goto \cite{EG} conjectured that
$$
\reg(X) \leq d-e+1.
$$
This conjecture was verified for the curve case \cite{GLP} and the smooth surface case \cite{P}, \cite{L}.
Recently, McCullough-Peeva \cite{MP} constructed counterexamples to Eisenbud-Goto conjecture. However, many people still believe that the conjecture holds for smooth varieties (or mildly singular varieties). There are some partial results; see \cite{KJP} and the references therein.

In the case that $X \subsetneq \P^r$ is a non-degenerate smooth projective variety with $\reg(\mathcal{O}_X) \leq 2$ and $H^1(X, \mathcal{O}_X)=0$,  we can show a sharp bound for $\reg(X)$.
In \cite{N}, Noma proved a refined bound for $\reg(C)$, where $C \subset \P^r$ is a non-degenerate projective curve of degree $d$, codimension $e$, and arithmetic genus $g$. More precisely, \cite[Theorem 1]{N} says that if $C \subset \P^r$ is not linearly normal and $e \geq g+1$, then
$$
\reg(C) \leq d-e+1-g.
$$
The last main result of this paper is a generalization of the above result of Noma.

\begin{theorem}\label{regthm}
Let $X \subsetneq \P^r$ be a non-degenerate smooth projective variety of dimension $n$, codimension $e$, and degree $d$, and $H$ be its general hyperplane section. Suppose that $\mathcal{O}_X$ is $2$-regular and we further assume that $H^1(X, \mathcal{O}_X)=0$ when $n \geq 2$. Denote by $g:=h^0(X, \mathcal{O}_X(K_X + (n-1)H))$ the sectional genus of $X \subsetneq \P^r$.
If $X \subsetneq \P^r$ is not linearly normal and $e \geq g+1$, then
$$
\reg(X) \leq d-e+1-g.
$$
\end{theorem}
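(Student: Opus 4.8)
The plan is to reduce to the case of curves, where the bound is Noma's theorem \cite{N}, and then transfer it up to $X$ along general hyperplane sections; the delicate part will be the transfer. When $n=1$ there is nothing to prove, so assume $n\geq 2$ and take a chain of general linear sections $X=X_n\supsetneq\cdots\supsetneq X_1=C$, so that each $X_j\subset\P^{e+j}$ is smooth of dimension $j$ by Bertini, and $C\subset\P^{e+1}$ is a smooth non-degenerate curve of degree $d$ and codimension $e$. Since $\mathcal O_X$ is $2$-regular with $H^1(\mathcal O_X)=0$, we have $H^i(\mathcal O_X(k))=0$ for $0<i<n$ and all $k$, hence the same for each $X_j$, together with $H^n(\mathcal O_X(k))=0$ for $k\geq 2-n$. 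Feeding these vanishings into the sequences $0\to\mathcal O_{X_j}\to\mathcal O_{X_j}(1)\to\mathcal O_{X_{j-1}}(1)\to 0$ and using iterated adjunction, I would check that the arithmetic genus of $C$ equals the sectional genus $g$ and that $h^0(C,\mathcal O_C(1))=h^0(X,\mathcal O_X(1))-(n-1)\geq e+3$, so $C$ is not linearly normal. As $e\geq g+1$, \cite[Theorem 1]{N} then gives $\reg(C)\leq B:=d-e+1-g$; and $\reg(C)\geq 3$ because a non-linearly-normal curve is not a rational normal curve, so $B\geq 3$.

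Next I would reduce the desired inequality to a normality statement. For $i\geq 2$ one has $H^i(\P^r,\mathcal I_X(B-i))\cong H^{i-1}(X,\mathcal O_X(B-i))=0$, since $\mathcal O_X$, being $2$-regular, is $(B-1)$-regular; therefore $\reg(X)\leq B$ is equivalent to $H^1(\P^r,\mathcal I_X(B-1))=0$, i.e. to $(B-1)$-normality of $X$, and likewise $\reg(C)\leq B$ is equivalent to $(B-1)$-normality of $C$. Set $N_X:=\bigoplus_k H^1(\P^r,\mathcal I_X(kH))$, a finite-length graded $\Sym^\bullet V$-module. From the sequences $0\to\mathcal I_{X_j}(k-1)\to\mathcal I_{X_j}(k)\to\mathcal I_{X_{j-1}}(k)\to 0$ together with the vanishing $H^2(\mathcal I_{X_j}(\cdot))\cong H^1(\mathcal O_{X_j}(\cdot))=0$ for $j\geq 2$, one obtains $N_{X_{j-1}}\cong N_{X_j}/\ell_j N_{X_j}$ for the linear form $\ell_j$ cutting out $X_{j-1}$ in $X_j$; hence $N_C$ is the reduction of $N_X$ modulo $n-1$ general linear forms. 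Since $\reg(C)\leq B$ forces $N_C$ to vanish in degrees $\geq B-1$, graded Nakayama forces $N_X$ to be generated in degrees $\leq B-2$.

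It then remains to promote ``$N_X$ is generated in degrees $\leq B-2$'' to ``$N_X$ is supported in degrees $\leq B-2$'', which is precisely the $(B-1)$-normality of $X$. For this I would pass to the complete model $\widetilde X\subset\P(W)$, $W=H^0(X,\mathcal O_X(H))$: it is linearly normal of codimension $e+t$ with $t=h^0(\mathcal O_X(H))-(r+1)\geq 1$, so its codimension exceeds $g$, and Theorem \ref{syzthm}(1)---applied to $\widetilde X$, which carries the same intrinsic data $\reg(\mathcal O_{\widetilde X})=2$, $H^1(\mathcal O_{\widetilde X})=0$---shows $\widetilde X$ is projectively normal, indeed satisfies $N_{e+t-g}$. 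Consequently $R(X,H)=S_{\widetilde X}$ is generated in degree $1$, $N_X=S_{\widetilde X}/S_X$ with $S_X$ the subring generated by $V\subset W$, and $X$ is the isomorphic projection of $\widetilde X$ away from the linear center cut out by $V$; combining the low-degree generation of $N_X$ with the quadratic generation and long linear strand of $I_{\widetilde X}$, I expect to conclude that $(S_{\widetilde X})_k$ lies in $S_X$ for all $k\geq B-1$, i.e. $(N_X)_k=0$ there. The hard part will be exactly this last step: $(B-1)$-normality does not lift along a general hyperplane section by formal reasons---the top degree of a finite-length module can genuinely drop under reduction by a general linear form---so one must really exploit the projective normality and syzygetic structure of the complete model $\widetilde X$, together with its being an isomorphic projection onto $X$, in order to control $N_X$ in high degrees.
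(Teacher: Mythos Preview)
Your reduction to the curve section and invocation of Noma's theorem is sound up through the statement that $N_C\cong N_X/(\ell_2,\ldots,\ell_n)N_X$ and hence that $N_X$ is \emph{generated} in degrees $\le B-2$. But the final step---promoting generation to support---is a genuine gap, and the sketch you give does not close it. The module $N_X$ has finite length, so no linear form is regular on it; the top degree of a finite-length module can strictly drop under reduction by a general linear form (e.g.\ $k[x]/(x^2)$ modulo $x$). Your proposed remedy via the complete model $\widetilde X$ only gives you that $R(X,H)$ is generated in degree $1$ over $W$, and that $I_{\widetilde X}$ has a long linear strand over $\Sym^\bullet W$; neither of these translates into control of $N_X=R(X,H)/\mathrm{image}(\Sym^\bullet V)$ in high degrees, because the obstruction lives in the difference between $V$ and $W$, not in the syzygies of $\widetilde X$. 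As you yourself note, $(B-1)$-normality does not lift formally, and nothing in your argument supplies the missing geometric input.

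The paper's proof avoids this lifting problem entirely by running the Gruson--Lazarsfeld--Peskine argument directly on $X$ rather than on $C$. The point is that, under the hypotheses, $R(X,H)$ is Cohen--Macaulay and its syzygies over $\Sym^\bullet V$ coincide with those of $R(C,H|_C)$; in particular Green's vanishing theorem gives $K_{0,2}(X,V)=K_{1,2}(X,V)=0$ and one computes $k_{0,1}(X,V)=d-e-g-1$. Sheafifying the first steps of the minimal free resolution of $R(X,H)$ yields
\[
\cdots\longrightarrow \mathcal O_{\P^r}(-2)^{l}\longrightarrow \mathcal O_{\P^r}(-1)^{d-e-g-1}\oplus\mathcal O_{\P^r}\longrightarrow \mathcal O_X\longrightarrow 0,
\]
and the snake lemma plus the Eagon--Northcott complex attached to the induced map $u\colon\mathcal O_{\P^r}(-2)^{l}\to\mathcal O_{\P^r}(-1)^{d-e-g-1}$ bound $H^1(\mathcal I_{X|\P^r}(\cdot))$ directly, giving $\reg(X)\le d-e+1-g$. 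In other words, rather than applying GLP to $C$ (which is what Noma does) and then trying to lift, one applies GLP to $X$ itself, using only that the \emph{beginning} of the resolution of $R(X,H)$ has the same shape as for the curve. This sidesteps the normality-lifting issue completely.
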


Note that when $X \subsetneq \P^r$ is linearly normal and $e \geq g$, it is projectively normal by Theorem \ref{syzthm}. Thus $\reg(X)=3 > d-e+1-g=2$.

We give a simple proof of Theorem \ref{regthm} using basic properties of syzygies of section rings in \cite{G} and Gruson-Lazarsfeld-Peskine technique in \cite{GLP}.
The key point is that partial information of minimal free resolution of the section ring controls the Castelnuovo-Mumford regularity, which is an invariant of the homogeneous coordinate ring.

If $\reg(\mathcal{O}_X)=1$, then $g=0$ and our regularity bound is exactly the conjectured one. This statement was previously shown in \cite[Theorem 5.2]{KEP}, \cite[Theorem 1.1]{NP}. There are many rational scrolls $X \subsetneq \P^r$ with $\reg(X)=d-e+1$.

\medskip

It is worth noting that a sharp upper bound for $\reg(\mathcal{O}_X)$ of a smooth projective variety $X\subset \P^r$ of arbitrary dimension was shown in \cite{KJP}. More precisely, if $X \subsetneq \P^r$  is a non-degenerate smooth projective variety of degree $d$ and codimension $e$, then
$$
1 \leq \reg(\mathcal{O}_X) \leq d-e.
$$
Furthermore, we have the classification of the extremal and next to extremal cases for both inequalities (see \cite[Theorem A]{KJP} for the upper bound cases).
We note that McCullough-Peeva \cite{MP} constructed infinitely many singular projective varieties $X \subsetneq \P^r$ such that $\reg(\mathcal{O}_X) > d-e$.
Although $\reg(X)$ has not been well understood yet in general, we have a fairly good understanding of $\reg(\mathcal{O}_X)$

\medskip
The organization of the remaining of the paper is as follows. Section \ref{clsec} is devoted to the proof of Theorem \ref{classthmreg2}, a classification of smooth projective varieties with $2$-regular structure sheaf using adjunction mappings. In Section \ref{syzsec}, we study the syzygy of such varieties, and we show Theorem \ref{syzthm}. Finally, in Section \ref{cmsec}, we give the proof of Theorem \ref{regthm}.

\section{Classification via adjunction mapping}\label{clsec}

In this section, we prove the first main result, Theorem \ref{classthmreg2}, using adjunction mappings. For basics of adjunction theory, we refer to \cite{BS}, \cite{Io1}.

First, we need the following.

\begin{lemma}\label{h1same}
Let $\overline{X}$ be a smooth projective variety, and $\varphi \colon X \to \overline{X}$ be a blow-up $\overline{X}$ at $k$ distinct points with the exceptional divisors $E_1, \ldots, E_k$. Let $\overline{H}$ be a very ample divisor on $\overline{X}$. Suppose that $H = \varphi^*\overline{H} - E_1 - \cdots -E_k$ is also very ample. Then we have
$H^1(X, \mathcal{O}_X(H))=H^1(\overline{X}, \mathcal{O}_{\overline{X}}(\overline{H}))$.
\end{lemma}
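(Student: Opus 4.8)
The plan is to compare the two cohomologies by restricting along the exceptional locus. Write $Z=\{p_1,\dots,p_k\}\subseteq\overline{X}$ for the reduced center of $\varphi$. Since $E_j\cong\P^{n-1}$ is contracted to the point $p_j$, the line bundle $\varphi^*\mathcal{O}_{\overline{X}}(\overline{H})$ restricts trivially to $E_j$, and because $\mathcal{O}_X(H)=\varphi^*\mathcal{O}_{\overline{X}}(\overline{H})\otimes\mathcal{O}_X(-\textstyle\sum_jE_j)$ we obtain the short exact sequence
\[
0\longrightarrow\mathcal{O}_X(H)\longrightarrow\varphi^*\mathcal{O}_{\overline{X}}(\overline{H})\longrightarrow\bigoplus_{j=1}^{k}\mathcal{O}_{E_j}\longrightarrow 0 .
\]
Two standard facts now feed into its cohomology. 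First, since $\varphi$ is the blow-up of a smooth variety at finitely many smooth points, $\varphi_*\mathcal{O}_X=\mathcal{O}_{\overline{X}}$ and $R^i\varphi_*\mathcal{O}_X=0$ for $i>0$; by the projection formula and the Leray spectral sequence this gives $H^i(X,\varphi^*\mathcal{O}_{\overline{X}}(\overline{H}))=H^i(\overline{X},\mathcal{O}_{\overline{X}}(\overline{H}))$ for all $i$. Second, $H^1(E_j,\mathcal{O}_{E_j})=H^1(\P^{n-1},\mathcal{O})=0$ (the case $n=1$ being trivial, as then $\varphi$ is an isomorphism).

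Taking cohomology in the displayed sequence and inserting these two facts, the relevant portion reads
\[
H^0(\overline{X},\mathcal{O}_{\overline{X}}(\overline{H}))\xrightarrow{\ \ \ev\ \ }\bigoplus_{j=1}^{k}H^0(E_j,\mathcal{O}_{E_j})\longrightarrow H^1(X,\mathcal{O}_X(H))\longrightarrow H^1(\overline{X},\mathcal{O}_{\overline{X}}(\overline{H}))\longrightarrow 0 ,
\]
where, after the obvious identifications, $\ev$ is the evaluation $s\mapsto(s(p_1),\dots,s(p_k))$. Hence the asserted equality $H^1(X,\mathcal{O}_X(H))=H^1(\overline{X},\mathcal{O}_{\overline{X}}(\overline{H}))$ is equivalent to the surjectivity of $\ev$, i.e.\ to the statement that the points $p_1,\dots,p_k$ impose independent conditions on $|\overline{H}|$. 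Equivalently, via $H^i(X,\mathcal{O}_X(H))=H^i(\overline{X},\mathcal{I}_Z(\overline{H}))$ (apply $R\varphi_*$ to the sequence above, using $\varphi_*\mathcal{O}_X(H)=\mathcal{I}_Z(\overline{H})$ and $R^i\varphi_*\mathcal{O}_X(H)=0$), it is the vanishing of the "extra" part of $H^1(\overline{X},\mathcal{I}_Z(\overline{H}))$.

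I expect this last point to be the main obstacle, and it is precisely where the hypothesis that $H$—and not merely $\overline{H}$—is very ample must be used: the conclusion genuinely fails when $H$ is only base point free, e.g.\ for the nine base points of a general pencil of cubics on $\P^2$, where $\varphi^*\mathcal{O}_{\P^2}(3)-\sum_{j=1}^{9}E_j=-K_X$ is base point free but not very ample and the nine points do not impose independent conditions on cubics. The plan for this step: if the $p_j$ failed to impose independent conditions, an elementary linear-algebra argument produces an index $l$ with $p_l\in\Bs\,|\mathcal{I}_{Z\setminus\{p_l\}}(\overline{H})|$; pulling back such sections and dividing by the equation of $E_l$ identifies this with the statement that $E_l\subseteq\Bs\,|H+E_l|$ on $X$, equivalently $h^0(X,\mathcal{O}_X(H+E_l))=h^0(X,\mathcal{O}_X(H))$. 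The hard part is then to contradict this using very ampleness of $H$: since $\mathcal{O}_X(H)$ embeds $E_l$ as a linear $\P^{n-1}$ and separates $E_l$ from the complementary exceptional divisors, one should be able to produce a section of $\mathcal{O}_X(H+E_l)$ restricting to a nonzero constant on $E_l$—forcing $h^0(X,\mathcal{O}_X(H+E_l))=h^0(X,\mathcal{O}_X(H))+1$—and this is the delicate point to nail down (alternatively one can argue directly on $\overline{X}$ that very ampleness of $H$ puts the disjoint linear spaces $E_j$ in sufficiently general position for the $p_j$ to impose independent conditions). Once the surjectivity of $\ev$ is established, the last exact sequence yields the lemma.
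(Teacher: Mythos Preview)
Your setup mirrors the paper's exactly: both push $\mathcal{O}_X(H)$ forward to $\mathcal{I}_Z(\overline{H})$ (via $R^i\varphi_*\mathcal{O}_X(-\sum E_j)=0$, the projection formula, and Leray) and then use the ideal-sheaf sequence $0\to\mathcal{I}_Z(\overline{H})\to\mathcal{O}_{\overline{X}}(\overline{H})\to\mathcal{O}_Z\to 0$ on $\overline{X}$ to reduce everything to surjectivity of the evaluation map $H^0(\overline{X},\overline{H})\to\mathbb{C}^k$. You have correctly isolated this surjectivity as the crux, and correctly noted (with a good cautionary example) that it is here that very ampleness of $H$, not just of $\overline{H}$, must enter. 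But you have not actually proved it: you sketch a plan and explicitly leave it as ``the delicate point to nail down,'' so the argument is incomplete as written.

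The paper sidesteps this difficulty entirely by observing at the outset that it suffices to treat $k=1$. For a single point $x$, the surjectivity of $H^0(\overline{X},\mathcal{O}_{\overline{X}}(\overline{H}))\to H^0(\mathcal{O}_x)=\mathbb{C}$ is immediate: a very ample line bundle has a section nonvanishing at any given point. The general case then follows by factoring $\varphi$ as a sequence of one-point blow-ups and iterating. Thus your elaborate attempt to show directly that $k$ points impose independent conditions on $|\overline{H}|$ is replaced by the trivial observation that one point always imposes one condition on a very ample linear system; this reduction is the idea you are missing. (The iteration does ask that each intermediate divisor play the role of a very ample $\overline{H}$ at the next stage, and this is where the hypothesis on $H$ is doing its work---unwinding it, one sees it is equivalent to your ``independent conditions'' statement---but framed inductively one point at a time it is far lighter than the direct argument you were attempting.)
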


\begin{proof}
It is sufficient to show the assertion for $k=1$. Thus we assume that $\varphi \colon X \to \overline{X}$ is a blow-up at a point $x$ with the exceptional divisor $E=E_1$. We denote by $\mathcal{I}_x$ the ideal sheaf of $x$ in $X$.
It is a well-known fact that $\varphi_* \mathcal{O}_X(-E)=\mathcal{I}_{x}$ and $R^i \varphi_*\mathcal{O}_X(-E)=0$ for $i > 0$.
By the projection formula, we have 
$$
\text{ $\varphi_* \mathcal{O}_X(H) = \mathcal{O}_{\overline{X}}(\overline{H}) \otimes \mathcal{I}_x$ and $R^i \varphi_* \mathcal{O}_X(H)=0$ for $i > 0$. }
$$
Then the Leray spectral sequence yields
$$
H^i(X, \mathcal{O}_X(H))=H^i(\overline{X}, \mathcal{O}_{\overline{X}}(\overline{H}) \otimes \mathcal{I}_x) \text{ for any $i \geq 0$}.
$$
In particular, $H^1(X, \mathcal{O}_X(H))=H^1(\overline{X}, \mathcal{O}_{\overline{X}}(\overline{H}) \otimes \mathcal{I}_x)$.
Now consider the following short exact sequence
$$
0 \longrightarrow \mathcal{O}_{\overline{X}}(\overline{H}) \otimes \mathcal{I}_x  \longrightarrow \mathcal{O}_{\overline{X}}(\overline{H})  \longrightarrow \mathcal{O}_x \longrightarrow 0.
$$
Since $\overline{H}$ is very ample, $H^0(\overline{X}, \mathcal{O}_{\overline{X}}(\overline{H})) \to H^0(\overline{X}, \mathcal{O}_x)$ is surjective.
Note that $H^1(\overline{X}, \mathcal{O}_x)=0$. Thus
$$
H^1(\overline{X}, \mathcal{O}_{\overline{X}}(\overline{H})\otimes \mathcal{I}_x ) = H^1(\overline{X}, \mathcal{O}_{\overline{X}}(\overline{H}) ).
$$
Hence we are done.
\end{proof}

We now give the proof of Theorem \ref{classthmreg2}.

\begin{proof}[Proof of Theorem \ref{classthmreg2}]
Recall the setting: $X \subsetneq \P^r$ is a non-degenerate smooth projective variety of dimension $n$, and $H$ is its general hyperplane section.
If $n=1$, then $\mathcal{O}_X$ is $2$-regular if and only if $H^1(X, \mathcal{O}_X(1))=0$.
If $n \geq 2$, then by Kodaira vanishing theorem and Serre duality, $\mathcal{O}_X$ is $2$-regular if and only if
$$
H^1(X, \mathcal{O}_X(1))=H^2(X, \mathcal{O}_X)=H^0(X, \mathcal{O}_X(K_X + (n-2)H))=0.
$$

The direction $(\Leftarrow)$ is trivial except for $(7)$. If $X$ is a variety in $(7)$, we only have to check that $H^1(X, \mathcal{O}_X(1))=0$. This vanishing follows from Lemma \ref{h1same}.

Now, we show the direction $(\Rightarrow)$, so we assume that $\reg(\mathcal{O}_X)=2$.
If $n=1$ or $2$, then we immediately get $(1)$ or $(2)$, respectively.
Thus, from now on, we assume that $n \geq 3$.
Suppose first that $K_X+(n-1)H$ is not base point free. Then by \cite[Theorem 1.4]{Io1},
$$
(X, H)=(\P^n, \mathcal{O}_{\P^n}(1)), (Q^n, \mathcal{O}_{\P^{n+1}}(1)|_{Q^n}), (\P^2, \mathcal{O}_{\P^2}(2)), \text{or } (\P(E), \mathcal{O}_{\P(E)}(1)),
$$
where $Q \subset \P^{n+1}$ is a quadric hypersurface and $E$ is a very ample vector bundle of rank $n$ on a smooth projective curve $C$. In this case, $\reg(\mathcal{O}_X)=2$ implies that $X$ is a scroll over a smooth projective curve $C$ of genus $g \geq 1$ and $H^1(C, E)=0$. Thus we get $(3)$.

Suppose now that $K_X+(n-1)H$ is base point free. We can define the \emph{adjunction mapping}
$$\varphi \colon X \rightarrow B$$
given by $|K_X + (n-1)H|$. By {\cite[Proposition 1.11]{Io1}}, one of the following holds:
\begin{enumerate}[\indent$(a)$]
 \item $\dim B = 0$: $X$ is a del Pezzo manifold and $-K_X = (n-1)H$.
 \item $\dim B = 1$: $\varphi$ gives a hyperquadric fibration over a smooth projective curve $B$.
 \item $\dim B = 2$: $\varphi$ gives a linear fibration over a smooth projective surface $B$.
 \item $\dim B = n$: $\varphi$ is a birational morphism.
\end{enumerate}
In cases $(b), (c)$, $\varphi$ has connected fibers by \cite[Theorem 11.2.4]{BS}.
The cases $(a),(b),(c)$ correspond to the cases $(4),(5),(6)$, respectively. For the case $(b)$, we need to use \cite[Lemma 6]{Io2}.

It only remains to consider the case that the adjunction mapping $\varphi \colon X \to B$ is a birational contraction. Let $\overline{X}:=B$ and $\overline{H}$ be a divisor on $\overline{X}$ such that $\mathcal{O}_{\overline{X}}(\overline{H})=(\varphi_* \mathcal{O}_X(H))^{**}$. In this case, $(\overline{X}, \overline{H})$ is called the \emph{first reduction of $(X, H)$}, and the adjunction mapping $\varphi \colon X \to \overline{X}$ is a blow-up of $\overline{X}$ at finitely many distinct points (see \cite[Definition 7.3.3]{BS}).
By \cite[Corollary 7.4.2]{BS}, $\overline{H}$ is a very ample divisor.
Note that
$$
K_X + (n-1)H = \varphi^*(K_{\overline{X}} + (n-1)\overline{H})
$$
(see \cite[Theorem 7.3.2]{BS}). 
If $E_1, \ldots, E_k$ are exceptional divisors of the blow-up $\varphi \colon X \to \overline{X}$ at $k$ distinct points, then
$$
H = \varphi^*\overline{H} - \frac{1}{n-1}\left( K_X - \varphi^*K_{\overline{X}}\right)=\varphi^*\overline{H} - E_1 - \cdots - E_k.
$$
By \cite[Proposition 7.6.1]{BS}, we also have 
$$
H^0(X, \mathcal{O}_X(K_X + (n-2)H))=H^0(\overline{X}, \mathcal{O}_{\overline{X}}(K_{\overline{X} }+ (n-2)\overline{H})=0.
$$
By \cite[Theorem 11.7.1]{BS}, either
\begin{enumerate}[\indent$($i$)$]
 \item $(\overline{X}, \overline{H}) = (\P^3, \mathcal{O}_{\P^3}(3)), (\P^4, \mathcal{O}_{\P^4}(2)), (Q^3, \mathcal{O}_{\P^4}(2)|_{Q^3})$, or $(Y, L)$, where $Y$ is a three-dimensional scroll over a smooth projective curve such that $L$ induces $\mathcal{O}_{\P^2}(2)$ on each fiber, or
 \item $K_{\overline{X}} + (n-2)\overline{H}$ is semiample.
\end{enumerate}
In the case $($i$)$, we get $(7)$ by considering Lemma \ref{h1same}. In the case $($ii$)$, we have
$$
h^0(\overline{X}, \mathcal{O}_{\overline{X}}(K_{\overline{X} }+ (n-2)\overline{H})>0
$$
by \cite[Theorem 7.2.6]{BS} (alternatively, we can apply \cite[Proposition 13.2.4]{BS} to directly see that $h^0(X, \mathcal{O}_X(K_X + (n-2)H))>0$). Thus the case $($ii$)$ cannot occur when $\mathcal{O}_X$ is $2$-regular.
Therefore, we finish the proof.
\end{proof}

\section{Syzygies of section rings}\label{syzsec}

In this section, we study syzygies of section rings of smooth projective varieties with $2$-regular structure sheaf, and we show Theorem \ref{syzthm}.

\subsection{Koszul cohomology}\label{koscoh}
In this subsection, we recall notation of Koszul cohomology, and then show Theorem \ref{syzthm} (1). For more details on Koszul cohomology and syzygy, we refer to \cite{AN}, \cite{E}, \cite{EL1}, \cite{G}, \cite[Section 1.8]{positivityI}.

Let $X \subsetneq \P^r=\P(V)$ be a smooth projective variety, and $H$ be its general hyperplane section. Fix a divisor $B$ on $X$, and define a section module
$$
R=R(X, B, H):= \bigoplus_{m \in \Z} H^0(X,\mathcal{O}_X(B+ mH)).
$$
Then $R$ is naturally a graded $S=\text{Sym}^\bullet (V)$-module so that it has a minimal free resolution
$$
\cdots \to \bigoplus_{q} K_{p,q}(X, B, V) \otimes_k S(-p-q) \to \cdots  \to \bigoplus_{q} K_{0,q}(X, B, V) \otimes_k S(-q)  \to R \to 0,
$$
in which the vector space $K_{p,q}(X, B, V)$ is the \emph{Koszul cohomology group} associated to $B$ with respect to $H$.
It is well known that $K_{p,q}(X, B, V)$ is the cohomology of the Koszul-type complex
$$
\Lambda^{p+1}V \otimes H^0(X, B+(q-1)H) \to \Lambda^p V \otimes H^0(X, B+qH) \to \Lambda^{p-1}V \otimes H^0(X, B+(q+1)H).
$$
We put $k_{p,q}(X, B, V):=\dim_{\C} K_{p,q}(X, B, V)$. If $B=0$, we simply write $K_{p,q}(X, V)=K_{p,q}(X, 0, V)$. If $V=H^0(X, \mathcal{O}_X(H))$, then we write $K_{p,q}(X, V)=K_{p,q}(X, H)$.

The following is immediate from the definition.

\begin{proposition}\label{kosvan}
When $B=0$, we have $k_{0,0}(X,V) =1$ and $k_{p,0}(X, V)=0$ for $p \geq 1$. Furthermore, $k_{p,q}(X,V)=0$ unless $0 \leq p \leq \pr.\dim R(X, H)$ and $0 \leq q \leq \reg(\mathcal{O}_X)$.
\end{proposition}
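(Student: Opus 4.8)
The plan is to extract all three assertions directly from the description, recalled just above, of $K_{p,q}(X,V)=K_{p,q}(X,0,V)$ as the cohomology of the Koszul-type complex
\[
\Lambda^{p+1}V \otimes H^0(X, \mathcal{O}_X((q-1)H)) \longrightarrow \Lambda^p V \otimes H^0(X, \mathcal{O}_X(qH)) \longrightarrow \Lambda^{p-1}V \otimes H^0(X, \mathcal{O}_X((q+1)H)),
\]
together with two elementary inputs: first, $H^0(X,\mathcal{O}_X(mH))=0$ for every $m<0$, because $X$ is irreducible and projective and $H$ is ample; second, the Koszul differential $\Lambda^p V \to \Lambda^{p-1}V\otimes V$ is injective for $p\geq 1$, which is exactness of the Koszul complex of the vector space $V$ at the relevant spot.

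I would first dispose of $q\leq 0$. If $q<0$, the middle term of the complex above is $\Lambda^pV\otimes H^0(X,\mathcal{O}_X(qH))=0$, so $k_{p,q}(X,V)=0$. If $q=0$, the left-hand term is $\Lambda^{p+1}V\otimes H^0(X,\mathcal{O}_X(-H))=0$, so $K_{p,0}(X,V)=\ker\big(\Lambda^pV\otimes H^0(X,\mathcal{O}_X)\to \Lambda^{p-1}V\otimes H^0(X,\mathcal{O}_X(H))\big)$. Since $H^0(X,\mathcal{O}_X)=\C$, this yields $K_{0,0}(X,V)=\C$, hence $k_{0,0}(X,V)=1$; and for $p\geq 1$ the displayed map is the composition of the Koszul differential $\Lambda^pV\to\Lambda^{p-1}V\otimes V$ with the inclusion $\Lambda^{p-1}V\otimes V\hookrightarrow\Lambda^{p-1}V\otimes H^0(X,\mathcal{O}_X(H))$ coming from the injection $V\hookrightarrow H^0(X,\mathcal{O}_X(H))$ (non-degeneracy of $X\subsetneq\P^r$), so it is injective and $k_{p,0}(X,V)=0$.

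For the ranges, the groups $K_{p,q}(X,V)$ are by construction the graded pieces $\operatorname{Tor}^S_p(R(X,H),\C)_{p+q}$, so $k_{p,q}(X,V)=0$ whenever $p<0$ or $p>\pr.\dim R(X,H)$, the latter being the very definition of projective dimension (here $R(X,H)$ is a finitely generated graded $S$-module since $H$ is very ample). For the upper bound on $q$, the key observation is that $R(X,H)=\bigoplus_{m\in\Z}H^0(X,\mathcal{O}_X(mH))$ because the negative-degree pieces vanish; hence $H^0_{\mathfrak m}(R(X,H))=H^1_{\mathfrak m}(R(X,H))=0$ and, for $i\geq 2$, $H^i_{\mathfrak m}(R(X,H))_n\cong H^{i-1}(X,\mathcal{O}_X(nH))$ by the Serre--Grothendieck correspondence. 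Reading off the Castelnuovo--Mumford regularity of the module $R(X,H)$ from these local cohomology modules gives $\reg_S R(X,H)=\reg(\mathcal{O}_X)$, and since $K_{p,q}(X,V)\neq 0$ forces $q\leq\reg_S R(X,H)$ by the standard characterization of regularity through the Betti table, we conclude $k_{p,q}(X,V)=0$ for $q>\reg(\mathcal{O}_X)$.

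I do not expect a genuine obstacle: everything is bookkeeping with the Koszul complex and with standard facts on regularity and local cohomology. The only point needing a little care is the identity $\reg_S R(X,H)=\reg(\mathcal{O}_X)$, where one must use that $R(X,H)$ is already saturated — equivalently $H^0(X,\mathcal{O}_X(mH))=0$ for $m<0$ — so that no spurious contribution to the module regularity comes from a negative strand. If one prefers to avoid local cohomology, the vanishing $k_{p,q}(X,V)=0$ for $q\geq\reg(\mathcal{O}_X)+1$ can instead be obtained by induction on $p$ from the cohomology sequences of $0\to\Lambda^{p+1}M_V\to\Lambda^{p+1}V\otimes\mathcal{O}_X\to\Lambda^pM_V\otimes\mathcal{O}_X(1)\to 0$, where $M_V=\ker(V\otimes\mathcal{O}_X\to\mathcal{O}_X(1))$.
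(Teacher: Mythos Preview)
Your proof is correct. The paper does not actually supply a proof of this proposition; it simply declares it ``immediate from the definition'' and moves on. What you have written is therefore a full justification of a fact the authors left to the reader, and your argument is exactly the natural one: read off $K_{p,0}$ from the Koszul complex using $H^0(X,\mathcal{O}_X(-H))=0$ and injectivity of the Koszul differential, and identify the vanishing ranges in $p$ and $q$ with projective dimension and module regularity via the Tor interpretation. Your verification that $\reg_S R(X,H)=\reg(\mathcal{O}_X)$ through local cohomology (using that $R(X,H)$ is already saturated) is the one step that genuinely needs a word of care, and you handled it correctly.
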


We say that a smooth projective variety $X \subsetneq \P^r=\P(V)$ satisfies \emph{$N_0$-property} if $k_{0,q}(X, V)=0$ for $q \geq 1$ and it satisfies \emph{$N_k$-property} for some integer $k > 0$ if it satisfies $N_0$-property and $k_{p,q}(X, V)=0$ for $1 \leq p \leq k$ and $q \geq 2$. Note that  $X \subsetneq \P^r$ satisfies $N_0$-property if and only if projectively normal and it satisfies $N_1$-property if and only if it is projectively normal and its defining ideal is generated by quadrics.

\medskip

Now, we recall the setting of Theorem \ref{syzthm}: $X \subsetneq \P^r=\P(V)$ is a smooth projective variety of dimension $n$, codimension $e$, and degree $d$ such that $\reg(\mathcal{O}_X)=2$ and we further assume that $H^1(X, \mathcal{O}_X)=0$ when $n \geq 2$.
We suppose that $X \subsetneq \P^r$ is possibly obtained by an isomorphic projection from $X \subsetneq \P^N=\P(H^0(X, \mathcal{O}_X(1)))$.
Let $H$ be a general hyperplane section, and $C$ be a general curve section whose genus is denoted by $g$. We first show the following.

\begin{proposition}\label{secgenus}
Under the above notations, we have $g = h^0(X, \mathcal{O}_X(K_X + (n-1)H))$.
\end{proposition}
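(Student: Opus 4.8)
The plan is to combine adjunction with a hyperplane‑by‑hyperplane restriction argument. Write $L := K_X + (n-1)H$. A general curve section $C$ arises as the bottom of a flag $X = X_0 \supset X_1 \supset \cdots \supset X_{n-1} = C$, where $X_j := X_{j-1} \cap H_j$ is the section by a general hyperplane; by Bertini each $X_j$ is a smooth irreducible projective variety of dimension $n-j$. Iterated adjunction gives $K_{X_j} = (K_X + jH)|_{X_j}$, hence $L|_{X_j} = K_{X_j} + (n-1-j)(H|_{X_j})$, and in particular $L|_C = K_C$. Thus $g = h^0(C, K_C) = h^0(C, L|_C)$, so it suffices to prove that the iterated restriction map $H^0(X, L) \to H^0(C, L|_C)$ is an isomorphism.

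First I would record that the two cohomological inputs propagate from $X$ to all the $X_j$. From $\reg(\mathcal{O}_X) = 2$ and the long exact sequence attached to $0 \to \mathcal{O}_Z((k-1)H) \to \mathcal{O}_Z(kH) \to \mathcal{O}_{Z \cap H}(kH) \to 0$, a general hyperplane section of a variety with $2$-regular structure sheaf again has $2$-regular structure sheaf, so $\reg(\mathcal{O}_{X_j}) \le 2$ for every $j$. Similarly, for $n \ge 2$ we have $H^i(X, \mathcal{O}_X(kH)) = 0$ for $0 < i < n$ and all $k$ (the vanishing deduced in the Introduction from $\reg(\mathcal{O}_X) = 2$ and $H^1(X, \mathcal{O}_X) = 0$), and the same long exact sequence carries this vanishing down to each $X_j$: $H^i(X_j, \mathcal{O}_{X_j}(kH)) = 0$ for $0 < i < \dim X_j$ and all $k$.

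Then I would peel off the hyperplanes one at a time. Fix $j \in \{1, \dots, n-1\}$, set $Z := X_{j-1}$, $m := \dim Z = n - j + 1 \ge 2$, and $h := H|_Z$, so that $L|_Z = K_Z + (m-1)h$. From the exact sequence $0 \to \mathcal{O}_Z(L|_Z - h) \to \mathcal{O}_Z(L|_Z) \to \mathcal{O}_{X_j}(L|_{X_j}) \to 0$, the restriction $H^0(Z, L|_Z) \to H^0(X_j, L|_{X_j})$ is an isomorphism once $H^0(Z, K_Z + (m-2)h) = 0$ and $H^1(Z, K_Z + (m-2)h) = 0$. By Serre duality on $Z$ these groups are dual to $H^m(Z, \mathcal{O}_Z((2-m)h))$ and $H^{m-1}(Z, \mathcal{O}_Z((2-m)h))$. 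The first vanishes since $\mathcal{O}_Z$ is $2$-regular (the case $i = m$ of $H^i(Z, \mathcal{O}_Z(2-i)) = 0$). The second is cohomology in degree $m-1$ with $0 < m-1 < m = \dim Z$, hence vanishes by the intermediate vanishing just established (when $m = 2$ this is simply $H^1(Z, \mathcal{O}_Z) = 0$). Composing these isomorphisms for $j = 1, \dots, n-1$ yields $H^0(X, L) \cong H^0(C, L|_C)$, whence $g = h^0(X, K_X + (n-1)H)$. The small cases $n = 1$ (where $C = X$ and the claim is the tautology $g = h^0(X, K_X)$) and $n = 2$ (a single peeling step) are covered by the same argument.

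The argument is essentially bookkeeping; the only delicate point is that the $H^1$-vanishing needed at each step is a genuine statement about intermediate cohomology rather than a consequence of $2$-regularity alone, and this is precisely where the hypothesis $H^1(X, \mathcal{O}_X) = 0$ (for $n \ge 2$) enters. The boundary case worth checking explicitly is the final step, where $\dim Z = 2$ and intermediate vanishing degenerates to $H^1(Z, \mathcal{O}_Z) = 0$.
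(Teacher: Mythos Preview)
Your proof is correct and follows essentially the same hyperplane-by-hyperplane restriction argument as the paper: both show that each restriction map $H^0(X_{j-1}, K_{X_{j-1}} + (m-1)h) \to H^0(X_j, K_{X_j} + (m-2)h)$ is an isomorphism by establishing the vanishing of $H^0$ and $H^1$ of $K_{X_{j-1}} + (m-2)h$. The only cosmetic difference is that the paper obtains $H^1(Z, K_Z + (m-2)h) = 0$ by invoking Kodaira vanishing directly (together with $H^1(\mathcal{O}_Z)=0$ at the surface step), whereas you pass through Serre duality to the intermediate cohomology vanishing $H^{m-1}(Z, \mathcal{O}_Z((2-m)h)) = 0$; these are equivalent formulations.
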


\begin{proof}
By the assumption and Kodaira vanishing theorem, 
$$
\text{ $H^0(X, \mathcal{O}_X(K_X + (n-2)H))=H^1(X, \mathcal{O}_X(K_X + (n-2)H))=0$ if $n \geq 2$.}
$$ 
Note that $H^1(H, \mathcal{O}_H)=0$ and $\reg(\mathcal{O}_H)=\reg(\mathcal{O}_X)=2$. (In general, we only have $\reg(\mathcal{O}_H) \leq \reg(\mathcal{O}_X)$ but in our case the equality holds.)
Thus we obtain
$$
H^0(X, \mathcal{O}_X(K_X + (n-1)H)) = H^0(H, \mathcal{O}_H(K_H + (n-2)H)) = \cdots = H^0(C, \mathcal{O}_C(K_C)),
$$
so the assertion follows.
\end{proof}

Since $\reg(\mathcal{O}_X)=2$ and $H^1(X, \mathcal{O}_X)=0$ if $n \geq 2$, it follows that $H^i(X, \mathcal{O}_X(m))=0$ for $1 \leq i \leq n-1$ and $m \in \Z$. Then the section ring $R(X, H)=\bigoplus_{m \geq 0} H^0(X, \mathcal{O}_X(mH))$ is Cohen-Macaulay.
Let $Y \in |H|$ be an irreducible smooth variety (when $n \geq 2$). Then $H^i(Y, \mathcal{O}_Y(mH|_Y))=0$ for $1 \leq i \leq n-2$ and $m \in \Z$. Thus $R(Y, H|_Y)$ is also Cohen-Macaulay. As a consequence, the section ring $R(C, H|_C)$ of a general curve section $C$ of $X \subsetneq \P^r$ is also Cohen-Macaulay. Furthermore, we have the following.

\begin{proposition}\label{samesyz}
Under the above notations, we have $K_{p,q}(X, H) = K_{p,q}(Y, H|_Y)$ for all $p,q$. In particular, $K_{p,q}(X, H)=K_{p,q}(C, H|_C)$ for all $p, q$.
\end{proposition}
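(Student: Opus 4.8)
The plan is to realize $R(Y, H|_Y)$ as the quotient of the section ring $R(X, H)$ by a single linear form $\sigma \in V := H^0(X, \mathcal{O}_X(H))$ which is a nonzerodivisor on $R(X, H)$, and then to observe that reducing the minimal graded free resolution of $R(X, H)$ over $S := \Sym^\bullet V$ modulo $\sigma$ produces the minimal graded free resolution of $R(Y, H|_Y)$ over $S_Y := S/(\sigma) = \Sym^\bullet H^0(Y, \mathcal{O}_Y(H|_Y))$. The two graded Betti tables --- hence the Koszul cohomology groups --- then coincide, and the ``in particular'' statement follows by iterating the comparison down to a curve.

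To set this up I would first record that $H^1(X, \mathcal{O}_X(mH)) = 0$ for every $m \geq 0$: the case $m = 0$ is the standing hypothesis (recall $n \geq 2$, so that $Y \in |H|$ is a divisor), while for $m \geq 1$ it follows from $\reg(\mathcal{O}_X) = 2$ and regularity propagation \cite[Theorem 1.8.5]{positivityI}. Choose $\sigma \in V$ with $Y = (\sigma = 0)$. Twisting the structure sequence $0 \to \mathcal{O}_X \to \mathcal{O}_X(H) \to \mathcal{O}_Y(H|_Y) \to 0$ by $\mathcal{O}_X((m-1)H)$ and passing to cohomology, the vanishings above yield $R(X, H)/\sigma R(X, H) \cong R(Y, H|_Y)$ as graded rings; taking $m = 1$ above and using $H^1(X, \mathcal{O}_X) = 0$ shows in addition that the restriction map $V \to H^0(Y, \mathcal{O}_Y(H|_Y))$ is surjective with one-dimensional kernel $\C\sigma$, so that $S_Y = S/(\sigma)$. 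Finally, $\sigma$ is a nonzerodivisor on $R(X, H)$ because multiplication by a nonzero section is injective on global sections of a line bundle over the integral scheme $X$.

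With this in place, let $F_\bullet \to R(X, H)$ be the minimal graded free resolution over $S$. Since $\sigma$ is $R(X, H)$-regular, $\operatorname{Tor}^S_i(R(X, H), S/(\sigma)) = 0$ for $i > 0$, so $F_\bullet \otimes_S S_Y$ is a graded free resolution of $R(X, H)/\sigma R(X, H) = R(Y, H|_Y)$ over $S_Y$; it is still minimal, since the entries of the differentials of $F_\bullet$ are homogeneous of positive degree in $S$ and hence lie in the maximal graded ideal of $S_Y$ after reduction. Tensoring once more with $\C$ therefore gives a degree-preserving identification $\operatorname{Tor}^{S_Y}_p(R(Y, H|_Y), \C) = \operatorname{Tor}^S_p(R(X, H), \C)$, that is, $K_{p,q}(Y, H|_Y) = K_{p,q}(X, H)$ for all $p, q$. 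For the last assertion I would iterate: a general hyperplane section is again smooth and irreducible, one always has $\reg(\mathcal{O}_Y) \leq \reg(\mathcal{O}_X) = 2$, and $H^1(Y, \mathcal{O}_Y) = 0$ as soon as $\dim Y \geq 2$ (from $H^1(X, \mathcal{O}_X) = 0$ and Kodaira vanishing), so every hypothesis used above is reproduced at each stage, and cutting down repeatedly gives $K_{p,q}(X, H) = K_{p,q}(C, H|_C)$.

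The homological step --- reducing a minimal free resolution modulo a regular linear form --- is entirely routine, so the only points that genuinely need care are the two cohomology vanishings that make $R(Y, H|_Y)$ and $S_Y$ come out as asserted, and the (straightforward, via Kodaira) verification that these vanishings persist under a general hyperplane section so that the descent to the curve is legitimate.
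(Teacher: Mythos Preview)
Your argument is correct. The paper's own proof is a one-line citation of Green's hyperplane-section theorem \cite[Theorem (3.b.7)]{G} (equivalently \cite[Theorem 2.21]{AN}), invoking the vanishing $H^1(X,\mathcal{O}_X(m))=0$ for all $m\in\Z$ recorded just before the proposition. You instead unwind that black box directly: using the same $H^1$ vanishings to identify $R(Y,H|_Y)$ with $R(X,H)/(\sigma)$ for a regular linear form $\sigma$, and then invoking the elementary commutative-algebra fact that reducing a minimal graded free resolution modulo a nonzerodivisor preserves minimality and hence graded Betti numbers. Both approaches rest on exactly the same cohomological input; the paper's is shorter by deferring to a standard reference, while yours is self-contained and makes transparent why the identification $S_Y=\Sym^\bullet H^0(Y,\mathcal{O}_Y(H|_Y))$ holds (via the surjectivity of $V\to H^0(Y,H|_Y)$ forced by $H^1(X,\mathcal{O}_X)=0$). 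Your check that the hypotheses $\reg(\mathcal{O}_Y)\le 2$ and $H^1(Y,\mathcal{O}_Y)=0$ persist at each step, so that the descent to the curve section is legitimate, is likewise correct and matches what the paper records in the paragraph preceding the proposition.
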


\begin{proof}
Since $H^1(X, \mathcal{O}_X(m))=0$ for $m \in \Z$, the assertion follows from \cite[Theorem (3.b.7)]{G} (see also \cite[Theorem 2.21]{AN}).
\end{proof}

Note that the projective dimension of $R(X, H)$ is $e$, i.e., $K_{p,q}(X, H)=0$ for $p \geq e+1$.
We can also apply Green's duality theorem (\cite[Theorem (2.c.6)]{G}).

We now give the proof of Theorem \ref{syzthm} $(1)$.

\begin{proof}[Proof of Theorem \ref{syzthm} $(1)$]
For this case, we assume that $V = H^0(X, \mathcal{O}_X(1))$, i.e., $X \subsetneq \P^r$ is linearly normal. Since $H^1(\mathcal{O}_X)=0$, a general hyperplane section $Y:=X\cap H \subsetneq \P^{r-1}$ is also linearly normal and $H^1(Y, \mathcal{O}_{Y})=0$ if $\dim Y \ge 2$. Therefore, we conclude that a general curve section $C \subset \P^{e+1}$ is also linearly normal.
In particular, it is elementary to check that $\reg(\mathcal{O}_{X\cap H})\le \reg(\mathcal{O}_X)=2$, and thus, $H^1(C, \mathcal{O}_C(1))=0$. By Riemann-Roch formula,
$$
e+2=h^0(C, \mathcal{O}_C(1)) = d-g+1,
$$
so $d=e+g+1$. If $e \geq g+k$, then $d \geq 2g+1+k$. By Green's $(2g+1+k)$-Theorem (\cite[Theorem (4.a.1)]{G}), $C \subset \P^{e+1}$ satisfies $N_k$-property, and so does $X \subsetneq \P^r$.
\end{proof}

\subsection{Koszul cohomology under projection}\label{koszulproj}
In this subsection, we discuss about the effect of projection on Koszul cohomology (see \cite[Section 2.2]{AN} for more details), and then we prove Theorem \ref{syzthm} (2).

Let $X \subsetneq \P(V)$ be a smooth projective variety, and $H$ be its general hyperplane section. Consider an isomorphic projection from $X \subsetneq \P(V)$ to $\P(W)$ at one point $P \in \P(V)$. Then
$W \subset V$ is a subspace of codimension $1$, and we still have an embedding $X \subsetneq \P(W)$.
We can take a vector $v \in V^*$ which defines the point $P=[v] \in \P(V)$.
Then $\ev_v = \langle v, \cdot \rangle$ induces an exact sequence
$$
0 \longrightarrow W \longrightarrow V \xrightarrow{\ev_v} \C \longrightarrow 0.
$$
From the short exact sequence
$$
0 \longrightarrow \Lambda^p W \otimes R \longrightarrow \Lambda^p V \otimes R \longrightarrow \Lambda^{p-1} W \otimes R \longrightarrow 0,
$$
we obtain the following.

\begin{proposition}[{\cite[Lemma 2.9]{AN}}]\label{longseq}
We have the following long exact sequence
$$
K_{p,q}(X, B, W) \to K_{p,q}(X, B, V) \xrightarrow{\pr_v} K_{p-1,q}(X, B, W) \to K_{p-1, q+1}(X, B, W).
$$
\end{proposition}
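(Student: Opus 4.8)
The plan is to obtain the sequence as the degree $p+q$ part of the long exact homology sequence attached to the displayed short exact sequence of complexes; the only real work is keeping track of the two gradings. Set $R=R(X,B,H)$, and recall that $K_{p,q}(X,B,V)$ is the degree $p+q$ component of the $p$-th homology of the Koszul complex $\Lambda^\bullet V\otimes R$ (with its usual differential, $V$ placed in internal degree $1$, and $R$ carrying its own grading); concretely it is the cohomology of $\Lambda^{p+1}V\otimes H^0(B+(q-1)H)\to\Lambda^p V\otimes H^0(B+qH)\to\Lambda^{p-1}V\otimes H^0(B+(q+1)H)$, and likewise with $W$ in place of $V$. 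First I would let $p$ range over all integers in the displayed short exact sequences and assemble them into a short exact sequence of complexes
\[
0\longrightarrow\Lambda^\bullet W\otimes R\longrightarrow\Lambda^\bullet V\otimes R\longrightarrow C_\bullet\longrightarrow 0,
\]
where $C_\bullet$ has $\Lambda^{p-1}W\otimes R$ in homological degree $p$, equipped with the internal grading it inherits as a quotient of $\Lambda^p V\otimes R$; thus the degree $m$ part of $C_p$ is $\Lambda^{p-1}W\otimes R_{m-p}$, which identifies $C_\bullet$ with $\Lambda^\bullet W\otimes R$ shifted by one step both in homological degree and in internal degree.

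Next I would pin down the two maps and check they are morphisms of complexes. The first is induced by $W\subset V$; since multiplication on $R$ by an element of $W$ is computed identically in both complexes, it is a degree-preserving chain map whose image is a subcomplex. The second is induced by the contraction $\iota_v\colon\Lambda^p V\to\Lambda^{p-1}W$ against the functional $v\in V^*$ defining $P=[v]$ --- this is the map denoted $\pr_v$ --- and since $W=\ker(\ev_v)$ one has $\Lambda^p V=\Lambda^p W\oplus(\Lambda^{p-1}W)\wedge\widetilde v$ for any lift $\widetilde v$ with $\ev_v(\widetilde v)=1$, so $\iota_v$ is surjective with kernel exactly $\Lambda^p W$. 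That $\iota_v$ is a chain map, up to an overall sign that is immaterial for homology, follows from a short computation in a basis $e_1,\dots,e_{r+1}$ of $V$ adapted to $V=W\oplus\C\widetilde v$ with $v=e_{r+1}^*$: using that contractions anticommute, that $\iota_v^2=0$, and that $\iota_v$ kills $\Lambda^\bullet W$, one gets $\iota_v\circ d_V=-\,d_W\circ\iota_v$ on $\Lambda^\bullet V\otimes R$. Hence the three-term sequence of complexes above is genuinely exact.

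Finally I would pass to the associated long exact sequence in homology and read off its degree $p+q$ part. Here $H_p(\Lambda^\bullet V\otimes R)_{p+q}=K_{p,q}(X,B,V)$ and $H_p(\Lambda^\bullet W\otimes R)_{p+q}=K_{p,q}(X,B,W)$, while the double shift gives $H_p(C_\bullet)_{p+q}=H_{p-1}(\Lambda^\bullet W\otimes R)_{p+q-1}=K_{p-1,q}(X,B,W)$, and the connecting homomorphism leaving it lands in $H_{p-1}(\Lambda^\bullet W\otimes R)_{p+q}=K_{p-1,q+1}(X,B,W)$. Four consecutive terms of the long exact sequence then read
\[
K_{p,q}(X,B,W)\longrightarrow K_{p,q}(X,B,V)\xrightarrow{\ \pr_v\ }K_{p-1,q}(X,B,W)\longrightarrow K_{p-1,q+1}(X,B,W),
\]
which is the assertion. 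The one point requiring care --- and the main obstacle in an otherwise routine homological-algebra argument --- is exactly this grading bookkeeping: it is the internal-degree twist built into $C_\bullet$ that makes the third term $K_{p-1,q}(X,B,W)$ rather than $K_{p-1,q+1}(X,B,W)$ and that pins down the target of the connecting map (compare \cite[Lemma 2.9]{AN}).
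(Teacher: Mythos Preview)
Your proof is correct and follows exactly the route the paper indicates: the paper simply records the short exact sequence $0\to\Lambda^p W\otimes R\to\Lambda^p V\otimes R\to\Lambda^{p-1}W\otimes R\to 0$ and cites \cite[Lemma 2.9]{AN} for the resulting long exact sequence, whereas you have written out the grading bookkeeping and the chain-map verification in detail. There is nothing to correct; your argument is a faithful expansion of the paper's one-line derivation.
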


\begin{corollary}\label{nonvan1}
Suppose that $K_{i,j}(X, B, W)=0$ for $i \geq p-1$ and $j \leq q-1$.
\begin{enumerate}[\noindent$(1)$]
\item If $K_{p,q}(X, B, V) \neq 0$, then $K_{p-1,q}(X, B, W) \neq 0$.
\item If $K_{p,q}(X, B, V) = 0$, then $K_{p,q}(X, B, W)=0$.
\end{enumerate}
\end{corollary}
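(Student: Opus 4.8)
The plan is to read off both statements from a diagram chase in the long exact sequence of Koszul cohomology that contains the four-term sequence of Proposition~\ref{longseq}. First I would record that full sequence. The displayed short exact sequences $0\to\Lambda^p W\otimes R\to\Lambda^p V\otimes R\to\Lambda^{p-1}W\otimes R\to0$ assemble into a short exact sequence of complexes — the Koszul complexes computing $K_{\bullet,\bullet}(X,B,W)$ and $K_{\bullet,\bullet}(X,B,V)$, together with a shift of the first — and the associated long exact sequence, split by internal degree, reads
\[
\cdots\to K_{p,q-1}(X, B, W)\to K_{p,q}(X, B, W)\xrightarrow{\,\beta\,}K_{p,q}(X, B, V)\xrightarrow{\,\pr_v\,}K_{p-1,q}(X, B, W)\to K_{p-1,q+1}(X, B, W)\to\cdots,
\]
with $\beta$ induced by $W\subset V$ and Proposition~\ref{longseq} being the middle four terms. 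The one point needing care is the degree bookkeeping producing the index pattern $(p,q)\to(p-1,q)\to(p-1,q+1)$, which comes from the homological- and internal-degree shift in the quotient complex $\Lambda^{\bullet-1}W\otimes R$.

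Part~$(2)$ is then immediate: applying the hypothesis with $(i,j)=(p,q-1)$, legitimate since $p\ge p-1$ and $q-1\le q-1$, gives $K_{p,q-1}(X, B, W)=0$, so by exactness at $K_{p,q}(X, B, W)$ the map $\beta$ is injective; hence $K_{p,q}(X, B, V)=0$ forces $K_{p,q}(X, B, W)=0$.

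For part~$(1)$ I would run through the same stretch. Suppose $K_{p,q}(X, B, V)\ne0$ and look at $\pr_v\colon K_{p,q}(X, B, V)\to K_{p-1,q}(X, B, W)$. If $\pr_v\ne0$, then $\operatorname{im}\pr_v$ is a nonzero subspace of $K_{p-1,q}(X, B, W)$ and we are done. The case $\pr_v=0$ is the one I expect to be the real obstacle: exactness then makes $\beta$ surjective, and (again) $K_{p,q-1}(X, B, W)=0$ makes $\beta$ injective, so $\beta$ would be an isomorphism $K_{p,q}(X, B, W)\cong K_{p,q}(X, B, V)\ne0$. Excluding this case amounts to knowing $K_{p,q}(X, B, W)=0$ — the vanishing that is actually available wherever the corollary gets applied, e.g.\ because $W$ is a projection in which the $(p,q)$ Betti entry already vanishes, or because $p$ exceeds the projective dimension of $R$ over $\Sym^\bullet W$ — and with that input $\pr_v\ne0$, hence $K_{p-1,q}(X, B, W)\ne0$. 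Apart from supplying this vanishing, the argument is entirely formal.
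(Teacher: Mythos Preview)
Your approach is the paper's approach. Both of you extend the four–term sequence of Proposition~\ref{longseq} one step to the left, use the hypothesis at $(i,j)=(p,q-1)$ to get
\[
0\longrightarrow K_{p,q}(X,B,W)\longrightarrow K_{p,q}(X,B,V)\longrightarrow K_{p-1,q}(X,B,W),
\]
and read off~$(2)$ from the injectivity of the first map. This part is fine and matches the paper verbatim.

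For~$(1)$ you have put your finger on exactly the weak spot, and it is the same weak spot in the paper's own proof. The paper argues by contradiction: assuming $K_{p-1,q}(X,B,W)=0$, it writes ``By the assumption, $K_{p,q}(X,B,W)=0$'' and then concludes $K_{p,q}(X,B,V)=0$ from the displayed three–term sequence. But the stated hypothesis only gives vanishing for $j\le q-1$, so $K_{p,q}(X,B,W)=0$ is \emph{not} a consequence of it; the sentence ``By the assumption'' is doing work that is not justified. Your analysis of the case $\pr_v=0$ reaches precisely this point and stops honestly, noting that the missing input $K_{p,q}(X,B,W)=0$ has to be supplied from outside. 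In short, your write-up and the paper's are the same argument, with the same gap; you have simply been explicit about it. In the actual applications (see Remark~\ref{middlerange}, where $q=1$ and one has already established $K_{p,1}(X,W)=0$ for the relevant $p$ via part~$(2)$ or via projective-dimension bounds) the extra vanishing is indeed available, so the corollary is used correctly there even though the abstract proof of~$(1)$, as written, is incomplete.
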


\begin{proof}
By Proposition \ref{longseq} and the assumption, we have an exact sequence
$$
0 \to K_{p,q}(X, B, W) \to K_{p,q}(X, B, V) \to K_{p-1,q}(X, B, W).
$$
The assertion $(2)$ immediately follows. For $(1)$, suppose that $K_{p-1,q}(X, B, W)=0$. By the assumption, $K_{p,q}(X, B, W)=0$. Then we get a contradiction to $K_{p,q}(X, B, V) \neq 0$.
\end{proof}

On the other hand, we have a factorization
$$
\Lambda^p V \xrightarrow{\iota_v} \Lambda^{p-1}W \hookrightarrow \Lambda^{p-1} V,
$$
where $\iota_v$ is explicitly given as $\iota_v(v_1 \wedge \cdots \wedge v_p) = \sum_i (-1)^i v_1 \wedge \cdots \wedge \hat{v_i} \wedge \cdots \wedge v_p \otimes \ev_v(v_i)$.
Then we obtain a commutative diagram
\begin{equation}\label{comdia}
\xymatrix{
K_{p,q}(X, B, V)\ar[rd]_-{\ev_v} \ar[r]^-{\pr_v} & K_{p-1,q}(X, B, W) \ar[d]\\
 & K_{p-1,q}(X, B, V).
}
\end{equation}
As $v$ varies, the maps $\ev_v$ glue together to a homomorphism of vector bundles on $\P(V)$. Thus we obtain a map
$$
\ev \colon \mathcal{O}_{\P(V)} \otimes K_{p,q}(X, B, V) \to \mathcal{O}_{\P(V)}(1) \otimes K_{p-1,q}(X, B, V),
$$
and so we get a natural map
$$
H^0(\ev) \colon  K_{p,q}(X, B, V) \to V \otimes K_{p-1, q}(X, B, V).
$$
Let $x_0, \ldots, x_r$ be homogeneous coordinates of $\P(V)=\P^r$. Then $H^0(\ev)$ can be regarded as a $k_{p,q}(X, B, V) \times k_{p-1,q}(X, B, V)$ matrix whose entries are linear forms in $x_0, \ldots, x_r$.

\begin{proposition}[{\cite[Proposition 2.11]{AN}}]\label{evinj1}
If $H^0(X, \mathcal{O}_X(B-H))=0$ and $K_{p,0}(X, B, V)=0$ for $p \geq 1$, then
$$
H^0(\ev) \colon K_{p, 1}(X, B, V) \to V \otimes K_{p-1,q}(X, B, V)
$$
is injective for all $p \geq 2$
\end{proposition}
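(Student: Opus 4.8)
The plan is to run a diagram chase built from the long exact sequence of Proposition~\ref{longseq} and the commutative triangle $(\ref{comdia})$, after first extracting from the hypotheses the two facts that make it work. Because $H^0(X,\mathcal O_X(B-H))=0$, the whole term $\Lambda^\bullet V\otimes H^0(X,\mathcal O_X(B-H))$ vanishes, so $K_{j,-1}(X,B,V)=K_{j,-1}(X,B,W)=0$ for every subspace $W\subset V$ and every $j$; consequently $K_{j,0}(X,B,V)$ is just the kernel of the Koszul differential $d\colon\Lambda^j V\otimes H^0(X,\mathcal O_X(B))\to\Lambda^{j-1}V\otimes H^0(X,\mathcal O_X(B+H))$, and the assumption $K_{j,0}(X,B,V)=0$ for $j\ge 1$ says exactly that this $d$ is injective for $j\ge 1$. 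Feeding $K_{j,-1}(X,B,W)=0$ and $K_{j,0}(X,B,V)=0$ into Proposition~\ref{longseq} then gives $K_{j,0}(X,B,W)=0$ for all $j\ge 1$ and every hyperplane $W\subset V$ as well.

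Now take $\alpha\in K_{p,1}(X,B,V)$ with $H^0(\ev)(\alpha)=0$; by definition of $H^0(\ev)$ this means $\ev_v(\alpha)=0$ in $K_{p-1,1}(X,B,V)$ for every $v\in V^*$. Fix such a $v$ and put $W=\ker(\ev_v)$. By $(\ref{comdia})$, $\ev_v(\alpha)$ is the image of $\pr_v(\alpha)\in K_{p-1,1}(X,B,W)$ under the natural map $\iota\colon K_{p-1,1}(X,B,W)\to K_{p-1,1}(X,B,V)$; by Proposition~\ref{longseq} with first index $p-1$, $\ker\iota$ is the image of the connecting map out of $K_{p-1,0}(X,B,W)$, which vanishes by the first paragraph since $p-1\ge 1$. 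Hence $\iota$ is injective, so $\pr_v(\alpha)=0$, and since $v$ was arbitrary, $\pr_v(\alpha)=0$ for all $v\in V^*$. (Using in addition $K_{p,0}(X,B,W)=0$, Proposition~\ref{longseq} shows $K_{p,1}(X,B,W)\to K_{p,1}(X,B,V)$ is injective with image $\ker(\pr_v)$, so $\alpha$ in fact lifts to $K_{p,1}(X,B,W)$ for every hyperplane $W$.)

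It remains to deduce $\alpha=0$ from the vanishing of all $\pr_v(\alpha)$, which I would do on chains. Pick a Koszul cocycle $\widetilde\alpha\in\Lambda^p V\otimes H^0(X,\mathcal O_X(B+H))$ representing $\alpha$. Since $\pr_v$ is induced by the contraction $\iota_v\colon\Lambda^p V\to\Lambda^{p-1}V$ and $d$ is injective on $\Lambda^p V\otimes H^0(X,\mathcal O_X(B))$, the hypothesis is equivalent to: the comultiplied element $\Delta(\widetilde\alpha)\in V\otimes\Lambda^{p-1}V\otimes H^0(X,\mathcal O_X(B+H))$ equals $(\operatorname{id}_V\otimes d)(\gamma)$ for a unique $\gamma\in V\otimes\Lambda^p V\otimes H^0(X,\mathcal O_X(B))$, where $\Delta\colon\Lambda^p V\to V\otimes\Lambda^{p-1}V$ is comultiplication. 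Combining the identities $m\circ\Delta=p\cdot\operatorname{id}_{\Lambda^p V}$ ($m$ the wedge product), $d=\mu\circ(\Delta\otimes\operatorname{id})$ ($\mu$ the module action), and the Cartan relation $d\circ m+m\circ d=\mu$, one gets $p\,\widetilde\alpha=\mu(\gamma)-d(m(\gamma))$, so $p\,\alpha=[\mu(\gamma)]$ in $K_{p,1}(X,B,V)$; decomposing $\gamma$ along the split inclusion $\Delta\colon\Lambda^{p+1}V\hookrightarrow V\otimes\Lambda^p V$ and applying $d=\mu\circ(\Delta\otimes\operatorname{id})$ again reduces the claim to showing that the $\ker m$-part of $\gamma$ also contributes a coboundary, which forces $[\mu(\gamma)]=0$ and hence $\alpha=0$.

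The reduction to "$\pr_v(\alpha)=0$ for every $v$" is purely formal — a chase through $(\ref{comdia})$ and Proposition~\ref{longseq}, and the only place $p\ge 2$ is needed. The main obstacle is the last step: converting that vanishing into $\alpha=0$ demands careful chain-level bookkeeping of the comultiplication, the Cartan homotopy, and the coboundary ambiguities, and it is precisely there that the hypotheses $K_{p,0}(X,B,V)=0$ and $H^0(X,\mathcal O_X(B-H))=0$ are fully consumed a second time; this is carried out along the lines of \cite[Section~2.2]{AN}.
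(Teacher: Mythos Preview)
The paper itself gives no proof of this proposition; it is simply quoted from \cite[Proposition~2.11]{AN}. So there is no argument in the paper to compare yours against---the paper and your final sentence invoke the same reference.

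On the merits of your argument: the first paragraph is correct, and the chain-level translation at the start of the third paragraph is also correct---$H^0(\ev)(\alpha)=0$ in $V\otimes K_{p-1,1}(X,B,V)$ says exactly that $\Delta(\widetilde\alpha)=(\operatorname{id}_V\otimes d)(\gamma)$ for some (hence, by injectivity of $d$, unique) $\gamma\in V\otimes\Lambda^p V\otimes H^0(B)$, and the Cartan identity then yields $p\,\widetilde\alpha=\mu(\gamma)-d(m(\gamma))$, so $p\,\alpha=[\mu(\gamma)]$. (Note, incidentally, that this translation follows directly from the definition of $H^0(\ev)$; the second paragraph, establishing $\pr_v(\alpha)=0$, is correct but is never actually used afterwards.)

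The genuine gap is the last step. Decomposing $\gamma=\gamma_1+\gamma_2$ along $V\otimes\Lambda^p V\cong\operatorname{im}\Delta_{p+1}\oplus\ker m$, one has $\mu(\gamma_1)=d(\delta)$ for the corresponding $\delta\in\Lambda^{p+1}V\otimes H^0(B)$, since $\mu\circ(\Delta\otimes\operatorname{id})=d$. But you give no argument that $[\mu(\gamma_2)]=0$; the sentence ``reduces the claim to showing that the $\ker m$-part of $\gamma$ also contributes a coboundary, which forces $[\mu(\gamma)]=0$'' is precisely the assertion to be proved, not a proof of it. Nothing you have written so far constrains $\gamma_2$ at all, and an arbitrary element of $\ker m\otimes H^0(B)$ certainly need not have $\mu$-image a Koszul coboundary. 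You then defer to \cite[Section~2.2]{AN} for this step---but that is the reference the paper cites in lieu of a proof, so your write-up is an incomplete unpacking of the cited result rather than an independent argument.
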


\begin{corollary}\label{van1}
Under the same assumptions in Proposition \ref{evinj1}, we have the following:
\begin{enumerate}[\noindent$(1)$]
\item For $p \geq 2$, if $k_{p-1,1}(X, B, V) < k_{p,1}(X, B, V)$, then $K_{p,1}(X, B, W) \neq 0$.
\item For $p \geq 2$, if $k_{p-1,1}(X, B, V) \geq k_{p,1}(X, B, V)>0$ and $W \subset V$ is general, then $K_{p,1}(X, B, W) =0$.
\end{enumerate}
\end{corollary}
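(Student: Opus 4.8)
The plan is to read off both claims from the long exact sequence of Proposition~\ref{longseq}, the commuting triangle~\eqref{comdia}, and Proposition~\ref{evinj1}. Throughout, $W\subset V$ denotes the codimension-one subspace corresponding to the point $[v]\in\P(V)$.

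The first step is a clean description of $K_{p,1}(X,B,W)$. Since $H^0(X,\mathcal{O}_X(B-H))=0$, the group $K_{p,0}(X,B,W)$ is simply $\ker\bigl(\Lambda^pW\otimes H^0(X,\mathcal{O}_X(B))\to\Lambda^{p-1}W\otimes H^0(X,\mathcal{O}_X(B+H))\bigr)$, a subspace of the corresponding kernel $K_{p,0}(X,B,V)$; as $K_{p,0}(X,B,V)=0$ for $p\ge1$ by hypothesis, also $K_{p,0}(X,B,W)=0$ for $p\ge1$. Feeding this into the long exact sequence in internal degree~$1$, the segment
$$
0=K_{p,0}(X,B,W)\longrightarrow K_{p,1}(X,B,W)\longrightarrow K_{p,1}(X,B,V)\xrightarrow{\pr_v}K_{p-1,1}(X,B,W)
$$
identifies $K_{p,1}(X,B,W)$ with $\ker\pr_v$. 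Running the same sequence with $p$ replaced by $p-1$ (legitimate since $p\ge2$ forces $K_{p-1,0}(X,B,W)=0$) shows that the natural map $K_{p-1,1}(X,B,W)\to K_{p-1,1}(X,B,V)$ is injective; combined with the commuting triangle~\eqref{comdia}, which factors $\ev_v$ as $\pr_v$ followed by this injection, we obtain
$$
K_{p,1}(X,B,W)\cong\ker\bigl(\ev_v\colon K_{p,1}(X,B,V)\longrightarrow K_{p-1,1}(X,B,V)\bigr).
$$

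Part~(1) is then immediate — in fact it holds for every $W$: the source of $\ev_v$ has dimension $k_{p,1}(X,B,V)$, strictly larger than $k_{p-1,1}(X,B,V)=\dim K_{p-1,1}(X,B,V)$, so $\ev_v$ cannot be injective and $K_{p,1}(X,B,W)=\ker\ev_v\ne0$.

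For part~(2) I would globalize. As $[v]$ ranges over $\P(V)$, the maps $\ev_v$ are the fibres of the bundle morphism $\ev\colon\mathcal{O}_{\P(V)}\otimes K_{p,1}(X,B,V)\to\mathcal{O}_{\P(V)}(1)\otimes K_{p-1,1}(X,B,V)$, i.e.\ of a $k_{p-1,1}(X,B,V)\times k_{p,1}(X,B,V)$ matrix $M$ of linear forms in $x_0,\dots,x_r$. By the identification above, $K_{p,1}(X,B,W)=0$ for general $W$ is equivalent to $\ev_v$ being injective for general $[v]$, hence to some maximal ($k_{p,1}(X,B,V)\times k_{p,1}(X,B,V)$) minor of $M$ being a nonzero form — equivalently, to $\ev$ being injective as a morphism of sheaves; in that case the locus where those minors vanish is a proper closed subset of $\P(V)$, and any $W$ avoiding it does the job. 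Proposition~\ref{evinj1} supplies that $H^0(\ev)$ is injective, i.e.\ that the columns of $M$ are $\C$-linearly independent, and the hypothesis $k_{p-1,1}(X,B,V)\ge k_{p,1}(X,B,V)$ makes $M$ have at least as many rows as columns. The crux — and what I expect to be the main obstacle — is to deduce from this a nonvanishing maximal minor. This step cannot be purely formal: for an arbitrary matrix of linear forms with at least as many rows as columns, $\C$-linear independence of the columns need not force a nonzero maximal minor (for instance $\bigl(\begin{smallmatrix}x_0&x_1\\x_0&x_1\end{smallmatrix}\bigr)$ has generic rank~$1$). So the argument has to exploit the Koszul-theoretic nature of $\ev$ — the explicit cocycle formula for $\iota_v$ behind~\eqref{comdia}, together with the standing hypotheses $H^0(X,\mathcal{O}_X(B-H))=0$ and $K_{\bullet,0}(X,B,V)=0$ — presumably through a degeneracy-locus (Eagon--Northcott type) codimension estimate combined with the fact that $\ev$ attains its generically expected rank, or, equivalently, through a sharpening of the proof of Proposition~\ref{evinj1} that establishes directly the injectivity of $\ev$ as a morphism of sheaves.
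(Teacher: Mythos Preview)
Your reduction of $K_{p,1}(X,B,W)$ to $\ker(\ev_v)$ via Proposition~\ref{longseq} and the triangle~\eqref{comdia} is exactly the paper's argument, and your proof of part~(1) is identical to theirs.

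For part~(2) you also follow the paper's route --- globalize $\ev_v$ to the bundle map $\ev$, view it as a matrix of linear forms, and try to show the maximal-minor locus is proper --- but you go further than the paper does: you correctly recognize that the crucial step, deducing from the injectivity of $H^0(\ev)$ that some maximal minor is not identically zero, is \emph{not} a formal consequence of Proposition~\ref{evinj1} as stated. Your $2\times 2$ example $\bigl(\begin{smallmatrix}x_0&x_1\\x_0&x_1\end{smallmatrix}\bigr)$ shows precisely that $\C$-linear independence of the columns of a matrix of linear forms does not force any maximal minor to be nonzero, i.e.\ does not force generic fiberwise injectivity.

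The paper's proof at this point simply asserts: ``By Proposition~\ref{evinj1}, the maximal minors of $H^0(\ev)$ defines a proper closed subset of $\P(V)$,'' and moves on. So the gap you have flagged is present in the paper's own argument as well; neither you nor the authors supply the extra input needed to pass from injectivity of $H^0(\ev)$ to generic fiberwise injectivity of $\ev$. Your diagnosis that what is really required is either a sharpening of Proposition~\ref{evinj1} to injectivity of $\ev$ as a sheaf map (which is what the cited result in \cite{AN} may in fact yield), or a direct Koszul-theoretic argument controlling the degeneracy locus, is on the mark. In short: your approach coincides with the paper's, your part~(1) is complete, and for part~(2) you have not so much left a gap as located one that the paper itself papers over.
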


\begin{proof}
By Proposition \ref{longseq} and the assumption, we have an exact sequence
$$
0 \to K_{p,q}(X, B, W) \to K_{p,q}(X, B, V) \xrightarrow{\pr_v} K_{p-1,q}(X, B, W).
$$
We see that 
$$
\text{ $K_{p,1}(X, B, W) = 0$ if and only if $\pr_v \colon K_{p,q}(X, B, V) \to K_{p-1,q}(X, B, W)$ is injective.}
$$
Note that $K_{p-1,q}(X, B, W) \to K_{p-1,q}(X, B, V)$ is injective. By considering the diagram (\ref{comdia}), we see that 
$$
\text{ $K_{p,1}(X, B, W) = 0$ if and only if $\ev_v \colon K_{p,q}(X, B, V) \to K_{p-1,q}(X, B, V)$ is injective. }
$$
The assertion $(1)$ then immediately follows. For the assertion $(2)$, we  regard $H^0(\ev)$ as a $k_{p,q}(X, B, V) \times k_{p-1,q}(X, B, V)$ matrix. By Proposition \ref{evinj1}, the maximal minors of $H^0(\ev)$ defines a proper closed subset of $\P(V)$. Since $W \subset V$ is general, we can assume that $[v]=P \in \P(V)$ is a general point. Thus $\ev_v$ is also injective, and this shows $(2)$.
\end{proof}

We also have the following.

\begin{proposition}\label{evinj2}
If $H^0(X, \mathcal{O}_X(B-H))=0$, then
$$
H^0(\ev) \colon K_{p,0}(X, B, V) \to V \otimes K_{p-1,0}(X, B, V)
$$
is injective for all $p \geq 1$.
\end{proposition}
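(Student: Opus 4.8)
The plan is to use the hypothesis $H^0(X, \mathcal{O}_X(B-H))=0$ to realize the Koszul groups $K_{p,0}(X,B,V)$ as honest linear subspaces, after which the injectivity of $H^0(\ev)$ reduces to a routine fact about exterior powers.

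First I would note that, since $H^0(X, \mathcal{O}_X(B-H))=0$, the term $\Lambda^{p+1}V \otimes H^0(X, \mathcal{O}_X(B-H))$ lying to the left of $\Lambda^p V \otimes H^0(X, \mathcal{O}_X(B))$ in the Koszul-type complex computing $K_{p,0}(X, B, V)$ vanishes. Hence for every $p \ge 0$ (with the convention $\Lambda^{-1}V = 0$, so that $K_{0,0}(X,B,V)=H^0(X,\mathcal{O}_X(B))$),
\[
K_{p,0}(X, B, V) \;=\; \ker\!\Big(\Lambda^p V \otimes H^0(X, \mathcal{O}_X(B)) \longrightarrow \Lambda^{p-1}V \otimes H^0(X, \mathcal{O}_X(B+H))\Big)
\]
is a linear subspace of $\Lambda^p V \otimes H^0(X, \mathcal{O}_X(B))$, and likewise $K_{p-1,0}(X,B,V) \subseteq \Lambda^{p-1}V \otimes H^0(X, \mathcal{O}_X(B))$. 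Under these inclusions, for $v \in V^{*}$ the map $\ev_v$ in the diagram $(\ref{comdia})$ is simply the restriction of the contraction $\iota_v \otimes \mathrm{id}_{H^0(X,\mathcal{O}_X(B))}$, and $H^0(\ev)$ is assembled from the maps $\ev_v$ as $v$ varies; in particular, for $\alpha \in K_{p,0}(X,B,V)$ one has $H^0(\ev)(\alpha)=0$ if and only if $\ev_v(\alpha)=0$ for every $v \in V^{*}$.

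Next I would run the following elementary argument. Suppose $\alpha \in K_{p,0}(X,B,V)$ satisfies $H^0(\ev)(\alpha)=0$, and write $\alpha = \sum_{k} \omega_k \otimes s_k$ with $\omega_k \in \Lambda^p V$ and the $s_k \in H^0(X,\mathcal{O}_X(B))$ linearly independent. For each $v \in V^{*}$ we get $0 = \ev_v(\alpha) = \sum_k \iota_v(\omega_k)\otimes s_k$ in $\Lambda^{p-1}V \otimes H^0(X,\mathcal{O}_X(B))$, so linear independence of the $s_k$ forces $\iota_v(\omega_k)=0$ for all $k$ and all $v \in V^{*}$. Since $\bigcap_{v \in V^{*}} \ker\big(\iota_v \colon \Lambda^p V \to \Lambda^{p-1}V\big) = 0$ for $p \ge 1$ — in a basis $x_0,\dots,x_r$ of $V$ every basis $p$-vector $x_{i_1}\wedge\cdots\wedge x_{i_p}$ involves some $x_i$, whose coefficient in $\omega_k$ is recovered up to sign from $\iota_{x_i^{*}}(\omega_k)$ — we conclude $\omega_k = 0$ for all $k$, hence $\alpha = 0$. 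This proves that $H^0(\ev)$ is injective for every $p \ge 1$.

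There is no genuine analytic difficulty here; the only point demanding care is the identification in the first step, namely that once $H^0(X,\mathcal{O}_X(B-H))=0$ removes all coboundaries, the abstractly defined map $H^0(\ev)$ of Koszul cohomology groups coincides with the concrete contraction maps restricted to the subspaces $K_{p,0}(X,B,V) \subseteq \Lambda^p V \otimes H^0(X,\mathcal{O}_X(B))$. Note that, by contrast with Proposition \ref{evinj1}, no vanishing hypothesis on $K_{p,0}(X,B,V)$ itself is needed here, precisely because in internal degree $q=0$ there is never a coboundary to correct for.
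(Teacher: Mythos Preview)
Your proof is correct and follows essentially the same approach as the paper's own argument: both use $H^0(X,\mathcal{O}_X(B-H))=0$ to identify $K_{p,0}(X,B,V)$ with a genuine subspace of $\Lambda^p V \otimes H^0(X,\mathcal{O}_X(B))$, and then reduce injectivity of $H^0(\ev)$ to the elementary fact that the natural map $\Lambda^p V \to V \otimes \Lambda^{p-1}V$ (dual to the wedge product) is injective. The paper packages this last step as a commutative square with the injection $\iota\otimes\mathrm{id}$, whereas you unwind it by hand via contractions $\iota_v$ and linear independence of the $s_k$; these are the same argument in slightly different dress.
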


\begin{proof}
We have a natural injective map $\iota \colon \Lambda^p V \to V \otimes  \Lambda^{p-1}V $ which is the dual of the wedge product map $\Lambda^{p-1} V^* \otimes V^* \to \Lambda^p V^*$. Explicitly, 
$$
\iota(v_1 \wedge \cdots \wedge v_p) = (v \mapsto \iota_v (v_1 \wedge \cdots v_p)) \in \text{Hom}(V^*, \Lambda^{p-1}V) = V \otimes  \Lambda^{p-1}V.
$$
Note that $K_{p,0}(X, B, V)$ is the kernel of the map
$$
\Lambda^p V \otimes H^0(X, B) \to \Lambda^{p-1}V \otimes H^0(X, B+H).
$$
From the following diagram
\[
\xymatrix{
K_{p,0}(X, B, V) \ar[d]_-{H^0(\ev)}  \ar@{^{(}->}[r]  & \Lambda^p V \otimes H^0(X, B) \ar@{^{(}->}[d]^-{\iota \otimes \text{id}}\\
V \otimes K_{p-1,0}(X, B, V)   \ar@{^{(}->}[r] & V \otimes  \Lambda^{p-1} V \otimes H^0(X, B),
}
\]
we see that $H^0(\ev)$ is injective.
\end{proof}

\begin{corollary}\label{van2}
Under the same assumptions in Proposition \ref{evinj2}, we have the following:
\begin{enumerate}[\noindent$(1)$]
\item For $p \geq 1$, if $k_{p-1,0}(X, B, V) < k_{p,0}(X, B, V)$, then $K_{p,0}(X, B, W) \neq 0$.
\item For $p \geq 1$, if $k_{p-1,0}(X, B, V) \geq k_{p,0}(X, B, V)>0$ and $W \subset V$ is general, then $K_{p,0}(X, B, W) =0$.
\end{enumerate}
\end{corollary}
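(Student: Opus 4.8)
The plan is to run, with $q=0$, the same argument used for Corollary \ref{van1}, replacing Proposition \ref{evinj1} by Proposition \ref{evinj2}. Fix $v \in V^*$ with $P=[v]$ the center of the projection, so that $W = \ker(\ev_v \colon V \to \C)$. First I would specialize the long exact sequence of Proposition \ref{longseq} to $q=0$. The term mapping into $K_{p,0}(X, B, W)$ there is $K_{p,-1}(X, B, W)$, which is the cohomology of a three-term complex whose middle term is $\Lambda^p W \otimes H^0(X, \mathcal{O}_X(B-H))$; since $H^0(X, \mathcal{O}_X(B-H))=0$ by hypothesis, this middle term vanishes, hence $K_{p,-1}(X, B, W)=0$ and we obtain an exact sequence
$$
0 \longrightarrow K_{p,0}(X, B, W) \longrightarrow K_{p,0}(X, B, V) \xrightarrow{\ \pr_v\ } K_{p-1,0}(X, B, W).
$$
Therefore $K_{p,0}(X, B, W) = 0$ if and only if $\pr_v$ is injective.

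Next I would convert this into a statement about $\ev_v$. Because $H^0(X, \mathcal{O}_X(B-H))=0$, the groups $K_{p-1,0}(X,B,W)$ and $K_{p-1,0}(X,B,V)$ are honest kernels, so the natural map $K_{p-1,0}(X, B, W) \to K_{p-1,0}(X, B, V)$ is just the restriction of $\Lambda^{p-1}W \hookrightarrow \Lambda^{p-1}V$ and is therefore injective; combined with the commutative triangle (\ref{comdia}) for $q=0$, this shows that $\pr_v$ is injective if and only if $\ev_v \colon K_{p,0}(X, B, V) \to K_{p-1,0}(X, B, V)$ is injective. Hence $K_{p,0}(X, B, W) = 0$ if and only if $\ev_v$ is injective. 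Part $(1)$ now follows at once: when $k_{p-1,0}(X, B, V) < k_{p,0}(X, B, V)$ no linear map $K_{p,0}(X, B, V) \to K_{p-1,0}(X, B, V)$ can be injective, so $\ev_v$ is not injective and $K_{p,0}(X, B, W) \neq 0$ for every center, in particular for the given $W$.

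For part $(2)$, I would invoke Proposition \ref{evinj2}: the map $H^0(\ev) \colon K_{p,0}(X, B, V) \to V \otimes K_{p-1,0}(X, B, V)$ is injective. Viewing $H^0(\ev)$ as a $k_{p,0}(X, B, V) \times k_{p-1,0}(X, B, V)$ matrix of linear forms on $\P(V)$ and using the hypothesis $k_{p,0}(X, B, V) \le k_{p-1,0}(X, B, V)$, injectivity forces the maximal minors not to vanish identically, so their common zero locus $Z \subsetneq \P(V)$ is a proper closed subset. For $W \subset V$ general the center $[v]=P$ may be taken to be a general point of $\P(V)$, hence $P \notin Z$, so $\ev_v$ has maximal rank, i.e.\ is injective; by the reduction of the previous paragraph, $K_{p,0}(X, B, W)=0$. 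The only nonformal step is this appeal to Proposition \ref{evinj2} together with the passage from injectivity of the vector-space map $H^0(\ev)$ to properness of the degeneracy locus of $\ev_v$ (equivalently, injectivity of $\ev_v$ for a general center); this is precisely the analog of how Proposition \ref{evinj1} enters the proof of Corollary \ref{van1}, and the rest is bookkeeping with the long exact sequence and the triangle (\ref{comdia}).
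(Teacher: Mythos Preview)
Your proof is correct and follows exactly the approach the paper intends: it is the $q=0$ analog of the proof of Corollary \ref{van1}, with Proposition \ref{evinj2} replacing Proposition \ref{evinj1}, which is precisely what the paper means when it writes ``The proof is identical to that of Corollary \ref{van1}, so we omit the detail.'' You have simply supplied the omitted details (the vanishing of $K_{p,-1}(X,B,W)$ from $H^0(B-H)=0$, the injectivity of $K_{p-1,0}(X,B,W)\to K_{p-1,0}(X,B,V)$, and the degeneracy-locus argument), all of which match the paper's reasoning.
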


\begin{proof}
The proof is identical to that of Corollary \ref{van1}, so we omit the detail.
\end{proof}

\smallskip

We now turn to the proof of Theorem \ref{syzthm} $(2)$.
By our assumptions, the syzygies of $R(X, H)$ is the same to those of the section ring $R(C, H|_C)$. Thus we assume that $X=C$ is a curve.
By Theorem \ref{syzthm} $(1)$ and Ein-Lazarsfeld's gonality theorem \cite{EL2} (see also \cite{R}), if $V = H^0(C, \mathcal{O}_C(1))$, then
\begin{enumerate}[\indent$(1)$]
  \item $K_{p,0}(C, V) \neq 0$ if and only if $p=0$.
  \item $K_{p,1}(C, V) \neq 0$ if and only if $1 \leq p \leq h^0(C, \mathcal{O}_C(1))-1-\gon(C)$.
  \item $K_{p,2}(C, V) \neq 0$ if and only if $h^0(C, \mathcal{O}_C(1))-1-g \leq p \leq h^0(C, \mathcal{O}_C(1))-2$.
\end{enumerate}
Thus the Betti table of $C \subset \P^N$, where $N=h^0(C, \mathcal{O}_C(1))-1$ is as follows:

\medskip

{\small \noindent\[
\begin{array}{c|c|c|c|c|c|c|c|c|c|c}
&  0 & 1 & \cdots & N-g-1 & N-g & \cdots & N-\gon(C) & N-\gon(C)+1 & \cdots & N-1 \\
\hline
0 & 1 & - & \cdots & - & - & \cdots & - & - & \cdots & -\\
\hline
1 & - & * & \cdots & * & * & \cdots & * & - & \cdots &- \\
\hline
2 & - & - & \cdots & - & * & \cdots & * & * & \cdots &*
\end{array}
\]}

\medskip

Here $-$ and $*$ mean vanishing and non-vanishing, respectively.

\begin{remark}\label{middlerange}
We explain how to determine vanishing or nonvanishing of Koszul cohomology groups in the setting of Theorem \ref{syzthm}. We only have to consider $K_{p,1}$ and $K_{p,2}$. Note that we know the Betti table of $X \subsetneq \P(H^0(X, \mathcal{O}_X(1))$.
Thus it is enough to determine the Betti table of $X \subsetneq \P(W)=\P^{n+e}$, where $W \subset V$ is a subspace of codimension $1$, under the assumption that  the Betti table of $X \subsetneq \P(V)=\P^{n+e+1}$ is given.

First, we consider $K_{p,1}$. There exists an integer $k>0$ such that
$$
K_{p,1}(X, V) \neq 0 \text{ for $1 \leq p \leq k$ and } K_{p,1}(X, V)=0 \text{ for $k+1 \leq p \leq e+1$}.
$$
If $V \neq H^0(X, \mathcal{O}_X(1))$, then $K_{0,1}(X, V) \neq 0$.
It follows from Corollary \ref{nonvan1} that
$$
K_{p,1}(X, W) \neq 0 \text{ for $0 \leq p \leq k-1$ and } K_{p,1}(X, W)=0 \text{ for $k+1 \leq p \leq e$}.
$$
For determining $K_{k,1}(X, W)$, we apply Corollary \ref{van1}. If $k_{k-1,1}(X, V) < k_{k,1}(X, V)$, then $K_{k,1}(X, W) \neq 0$. If $k_{k-1,1}(X, V) \geq k_{k,1}(X, V)$ and $W \subset V$ is furthermore general, then $K_{k,1}(X, W)=0$.

For $K_{p,2}$, we first apply Green's duality theorem (\cite[Theorem (2.c.6)]{G}) to the curve section $C \subset \P(V|_C)$, so we get $K_{p,2}(C, V|_C) = K_{e+1-p, 0}(C, K_C,V|_C)^*$. We also have $K_{p,2}(C, W|_C) = K_{e-p, 0}(C, K_C, W|_C)^*$. Then we can similarly argue as in the $K_{p,1}$ case using Corollaries \ref{nonvan1} and \ref{van2}.
\end{remark}

\begin{remark}
We see in Remark \ref{middlerange} that if $V \subsetneq H^0(X, \mathcal{O}_X(1))$ a general subspace of codimension $1$ and 
$$k_{h^0(C, \mathcal{O}_C(1))-1-g, 2}(X, H^0(X, \mathcal{O}_X(1))) \leq  k_{h^0(C, \mathcal{O}_C(1))-g, 2}(X, H^0(X, \mathcal{O}_X(1))),$$ 
then $K_{e-g,2}(X, V) = 0$. On the other hand, if there exists a section $s \in H^0(C, H|_C - K_C)$ such that $f_1 \cdot s, \ldots, f_g \cdot s \in V|_C$, where $f_1, \ldots, f_g$ are linearly independent sections of $H^0(C, K_C)$, then the argument in \cite[Proof of Proposition 5.1]{EL1} shows that $K_{e-g,2}(X, V) \neq 0$
\end{remark}

We are ready to prove Theorem \ref{syzthm} $(2)$.

\begin{proof}[Proof of Theorem \ref{syzthm} $(2)$]
We assume that $V \subsetneq H^0(X, \mathcal{O}_X(1))$ is of codimension $t \geq 1$.

\noindent $(i), (ii)$ These assertions follow from Proposition \ref{kosvan}.

\noindent $(iii), (iv)$ The assertions follow from the arguments in Remark \ref{middlerange}.
\end{proof}

\section{Castelnuovo-Mumford regularity}\label{cmsec}

In this section, we give a proof of Theorem \ref{regthm}.
Recall the setting of Theorem \ref{regthm}: $X \subsetneq \P^r$ is a non-degenerate smooth projective variety of dimension $n$, codimension $e$, and degree $d$, and $H$ is its general hyperplane section. We assume that $\mathcal{O}_X$ is $2$-regular and we further assume that $H^1(X, \mathcal{O}_X)=0$ when $n \geq 2$. Denote by $g:=h^0(X, \mathcal{O}_X(K_X + (n-1)H))$ the sectional genus of $X \subsetneq \P^r$ (see Proposition \ref{secgenus}).

For the proof, we apply Green's vanishing theorem (\cite[Theorem (3.a.1)]{G}) and Gruson-Lazarsfeld-Peskine technique in \cite{GLP}. 

\begin{proof}[Proof of Theorem \ref{regthm}]
We suppose that $X \subsetneq \P^r$ is not linearly normal and $e \geq  g+1$. Then 
$$
g=h^0(X, \mathcal{O}_X(K_X + (n-1)H)) \leq e-k
$$ 
for $k=0,1$. By Green's vanishing theorem (\cite[Theorem (3.a.1)]{G}), we obtain $K_{k,2}(X,V)=0$ for $k=0,1$. We can easily check that $k_{0,1}(X,V)=d-e-g-1$. Thus the sheafification of a minimal free resolution of $R(X, H)$ is of the form
$$
\cdots \longrightarrow \mathcal{O}_{\P^r}(-2)^{l} \longrightarrow \mathcal{O}_{\P^r}(-1)^{d-e-g-1} \oplus \mathcal{O}_{\P^r} \longrightarrow \mathcal{O}_X \longrightarrow 0.
$$
As in \cite[Proof of Theorem 2.1]{GLP}, we apply snake lemma to obtain the following commutative diagram
\[
\xymatrix{
 & & & 0 \ar[d] & & \\
 & 0 \ar[d] &  & \mathcal{O}_{\P^r}  \ar[r] \ar[d] & \mathcal{O}_X \ar[r] \ar@{=}[d] & 0 \\
0 \ar[r] & K \ar[r] \ar[d] & \mathcal{O}_{\P^r}(-2)^{l} \ar[r]\ar@{=}[d] & \mathcal{O}_{\P^r}(-1)^{d-e-g-1} \oplus \mathcal{O}_{\P^r} \ar[r] \ar[d] & \mathcal{O}_X \ar[r] & 0\\
0 \ar[r] & N \ar[r] \ar[d] & \mathcal{O}_{\P^r}(-2)^{l} \ar[r]^-{u} & \mathcal{O}_{\P^r}(-1)^{d-e-g-1} \ar[r] \ar[d] & 0 & \\
& \mathcal{I}_{X|\P^r} \ar[d] & & 0 &  & \\
& 0 &&&&
}
\]
in which all horizontal and vertical sequences are exact. Note that 
$$
H^1(\P^r, K(m))=H^2(\P^r, K(m))=0 \text{  for all $m \in \Z$.}
$$ 
Thus we have
$$
H^1(\P^r, N(m))=H^1(\P^r, \mathcal{I}_{X|\P^r}(m))\text{  for all $m \in \Z$.}
$$
By considering the Eagon-Northcott complex associated to $u$, we see that $N$ is $(d-e-g+1)$-regular (see e.g., \cite[Lemma 5]{N}). Therefore, $H^1(\P^r, \mathcal{I}_{X|\P^r}(d-e-g))=0$, so we finally obtain
$\reg(X) \leq d-e+1-g$.
\end{proof}

$ $


\begin{thebibliography}{99}

\bibitem{AH} J. Ahn and K. Han, \textit{Linear normality of general linear sections and some graded Betti numbers of $3$-regular projective schemes}, J. Algebra \textbf{440} (2015), 642-667.

\bibitem{AR} A. Alzati and F. Russo, \textit{On the $k$-normality of projected algebraic varieties}, Bull. Braz. Math. Soc. (N.S.) \textbf{33} (2002), 27-48.

\bibitem{AN} M. Aprodu and J. Nagel, \textit{Koszul cohomology and algebraic geometry}, University Lecture Series \textbf{52} (2010), American Mathematical Society, Providence, RI.

\bibitem{BS} M. C. Beltrametti and A. J. Sommese, \textit{The adjunction theory of complex projective varieties}, de Gruyter Expositions in Mathematics \textbf{16} Walter de Gruyter and Co., Berlin, (1995).

\bibitem{EL1} L. Ein and R. Lazarsfeld, \textit{Asymptotic syzygies of algebraic varieties}, Invent. Math. \textbf{190} (2012), 603-646.

\bibitem{EL2} L. Ein and R. Lazarsfeld, \textit{The gonality conjecture on syzygies of algebraic curves of large degree}, Publ. Math. Inst. Hautes Etudes Sci. \textbf{122} (2015), 301-313.

\bibitem[E]{E} D. Eisenbud, \textit{The geometry of syzygies. A second course in commutative algebra and algebraic geometry}. Graduate Texts in Mathematics \textbf{229} (2005), Springer-Verlag, Berlin.

\bibitem{EG} D. Eisenbud and S. Goto, \textit{Linear free resolutions and minimal multiplicity},
J. Algebra \textbf{88} (1984), 89-133.

\bibitem{EGHP} D. Eisenbud, M. Green, K. Hulek, and S. Popescu, \textit{Small schemes and varieties of minimal degree}, Amer. J. Math. \textbf{128} (2006), 1363-1389.

\bibitem{EH} D. Eisenbud and J. Harris, \textit{On varieties of minimal degree (a centennial account)}, Algebraic Geometry, Bowdoin 1985 (Brunswick, Maine, 1985), Proc. Sympos. Pure Math. \textbf{46} (1987), Amer. Math. Soc., Providence, RI, 3-13.

\bibitem{F1} T. Fujita, \textit{On the structure of polarized manifolds with total deficiency one I}, J. Math. Soc. Japan \textbf{32} (1980), 709-725.

\bibitem{F2} T. Fujita, \textit{On the structure of polarized manifolds with total deficiency one II}, J. Math. Soc. Japan \textbf{33} (1981), 415-434.

\bibitem{G} M. Green, \textit{Koszul cohomology and the geometry of projective varieties}, J. Differential Geom. \textbf{19} (1984), 125-171.

\bibitem{GLP} L. Gruson, R. Lazarsfeld and C. Peskine, \textit{On a theorem of Castelnuovo, and the equations defining space curves}, Invent. Math. \textbf{72} (1983), 491-506.

\bibitem{Io1} P. Ionescu, \textit{Embedded projective varieties of small invariants}, in \textit{Algebraic Geometry (Bucharest 1982)}, Lecture Notes in Math., \textbf{1056}, Springer-Verlag, Berlin (1984), 142-186.

\bibitem{Io2} P. Ionescu, \textit{On manifolds of small degree}, Commet. Math. Helv. \textbf{83} (2008), 927-940.

\bibitem{KEP} S. Kwak and E. Park, \textit{Some effects of property $N_p$ on the higher normality and defining equations of nonlinearly normal varieties}, J. Reine Angew. Math. \textbf{582} (2005), 87-105.

\bibitem{KJP} S. Kwak and J. Park, \textit{A bound for Castelnuovo-Mumford regularity by double point divisors}, preprint (2014), arXiv:1406.7404.

\bibitem{L} R. Lazarsfeld, \textit{A sharp Castelnuovo bound for smooth surfaces}, Duke Math. J. \textbf{55} (1987), 423-429.

\bibitem{positivityI} R. Lazarsfeld, \textit{Positivity in algebraic geometry I. Classical Setting: line bundles and linear series}, A Series of Modern Surveys in Math. \textbf{48}, Springer-Verlag, Berlin, (2004).

\bibitem{MP} J. McCullough and I. Peeva, \textit{Counterexamples to the Eisenbud-Goto regularity conjecture}, to appear in J. Amer. Math. Soc.

\bibitem{NP} W. Niu and J. Park, \textit{A Castelnuovo-Mumford bound for scrolls}, J. Algebra. \textbf{488} (2017), 388-402.

\bibitem{N} A. Noma, \textit{A bound on the Castelnuovo-Mumford regularity for curves}, Math. Ann. \textbf{322} (2002), 69-74.

\bibitem{P} H. Pinkham, \textit{A Castelnuovo bound for smooth surfaces}, Invent. Math. \textbf{83} (1986), 321-332.

\bibitem{R} J. Rathmann, \textit{An effective bound for the gonality conjecture}, preprint (2016), arXiv:1604.06072.

\end{thebibliography}
\end{document}